\newtheorem{theorem}{Theorem}[section]
\newtheorem{proposition}[theorem]{Proposition}
\newtheorem{lemma}[theorem]{Lemma}
\newtheorem{definition}[theorem]{Definition}
\newtheorem{remark}[theorem]{Remark}
\numberwithin{equation}{section}
\begin{document}
\baselineskip=15.5pt

\title[Orbifold projective structures and symplectic form]{On the symplectic
structure over a moduli space of orbifold projective structures}

\author[P. Ar\'es-Gastesi]{Pablo Ar\'es-Gastesi}

\address{Department of Applied Mathematics and Statistics, School of Economics
and Business, Universidad CEU San Pablo, Madrid, Spain}

\email{pablo.aresgastesi@ceu.es}

\author[I. Biswas]{Indranil Biswas}

\address{School of Mathematics, Tata Institute of Fundamental
Research, Homi Bhabha Road, Bombay 400005, India}

\email{indranil@math.tifr.res.in}

\subjclass[2000]{53D30, 32G15.}

\keywords{Orbifold projective structure, Teichm\"uller space, Bers' section,
symplectic form, Liouville form.}

\date{}

\begin{abstract}
Let $S$ be a compact connected oriented smooth orbifold surface. We show that
using Bers simultaneous uniformization, the moduli space of
projective structures on $S$ can
be mapped biholomorphically onto the
total space of the holomorphic cotangent bundle of the Teichm\"uller space
for $S$. The total space of the holomorphic cotangent bundle of the Teichm\"uller
space is equipped with the Liouville holomorphic symplectic form, and the
moduli space of projective structures also has a natural holomorphic symplectic
form. The above identification between the
moduli space of projective structures on $S$ and the holomorphic cotangent bundle
of the Teichm\"uller space for $S$
is proved to be compatible with these
symplectic structures. Similar results are obtained for biholomorphisms
constructed using uniformizations provided by Schottky groups and Earle's version of
simultaneous uniformization.
\end{abstract}

\maketitle

\section{Introduction}\label{sec1}

The holomorphic automorphisms of the complex projective line ${\mathbb C}
{\mathbb P}^1$ are of the form $z\, \longmapsto\, (az+b)/(cz+d)$, where
$a,\, b,\, c,\, d$ are complex numbers with $ad-bc\,=\, 1$; these are known
as M\"obius transformations.
A projective structure on a $C^\infty$ compact oriented surface $R$ is defined
by a covering of $R$ by coordinate charts, compatible with the
orientation, so that all the transition functions
are M\"obius transformations. Two projective structures on $R$
are considered isomorphic if they differ by a diffeomorphism of $R$
homotopic to the identity map of $R$. Let $\mathcal P(R)$ denote the space
of all isomorphism classes of projective structures on $R$.

Consider the space of all complex structures on $R$ compatible with the orientation.
Two of them are called isomorphic if they differ by a diffeomorphism of $R$
homotopic to the identity map of $R$. Let $\mathcal T(R)$ denote the Teichm\"uller
space of $R$ that parametrizes the isomorphism classes of complex structures on $R$.
Clearly, a projective structure on $R$ induces a complex structure
on $R$ compatible with the orientation. So there is a natural map
$$\varphi\,:\,\mathcal P(R) \,\longrightarrow\, \mathcal T(R)\, .$$ Both
$\mathcal P(R)$ and $\mathcal T(R)$ are equipped with complex structures, and
the map $\varphi$ is holomorphic.

It is well-known that the space of all projective structures on a fixed Riemann
surface can be identified with the space of quadratic differentials on that
surface (see \cite[p. 292]{Nag}). This means that any $C^\infty$ section
\[
f\,:\, \mathcal T(R) \,\longrightarrow\, \mathcal P(R)
\]
of the above projection $\varphi$ produces a diffeomorphism
\[
T_f\,:\,T^*\mathcal T(R) \,\longrightarrow\, \mathcal P(R)\, ,
\]
where $T^*\mathcal T(R)$ is the holomorphic cotangent bundle of the Teichm\"uller
space (its fibers are identified with the space of quadratic differentials).
The above diffeomorphism $T_f$ is holomorphic if and only if $f$ is holomorphic.

Both $T^*\mathcal T(R)$ and $\mathcal P(R)$ have natural holomorphic
symplectic structures. Since $T^*\mathcal T(R)$ is a cotangent bundle, it
has the Liouville symplectic form
$$
\Omega_{\mathcal T}\,:=\, d\sigma\, ,
$$
where $\sigma$ is the tautological holomorphic one-form on $T^*\mathcal T(R)$. On
the other hand, any projective structure on $R$ produces
a flat principal $\text{PSL}(2,{\mathbb C})$--bundle
on $R$ (recall that the transition functions for a projective structure lie
in $\text{PSL}(2,{\mathbb C})$). Now taking monodromy of flat connections,
the space $\mathcal P(R)$ is mapped to an open subset of the smooth part of
the representation space
$$
\text{Hom}(\pi_1(R)\, , \text{PSL}(2,{\mathbb C}))/\text{PSL}(2,{\mathbb C})\, .
$$
This map is a local biholomorphism.
The smooth part of $$\text{Hom}(\pi_1(R)\, , \text{PSL}(2,{\mathbb C}))/\text{PSL}(2,
{\mathbb C})$$ is equipped with a holomorphic symplectic form
\cite{AtBo}, \cite{Go}. Pulling back this $\mathcal P(R)$ we get a holomorphic
symplectic form $\Omega_{\mathcal P}$ on $\mathcal P(R)$. In \cite{ka}, Kawai
showed that if $f$ is Bers' section $B$, then one has
\[
T^*_B\Omega_{\mathcal P} \,=\, \pi\cdot \Omega_{\mathcal T}\, .
\]
This result was extended to the Schottky's and
Earle's sections in \cite{Bis} and \cite{AB} respectively.

An orbifold surface is a surfaces with weighted marked points.
Our aim here is to address the question whether the above set-up generalizes to
orbifolds, and whether similar results hold for orbifolds. We answer these
questions affirmatively. More concretely, we
begin by recalling the definition of an orbifold surface $S$, and
explaining what a projective structure on an orbifold means. This leads us to
the definitions of Teichm\"uller space $\mathcal T(S)$ and the space
of projective structures $\mathcal P(S)$ for $S$. As in the surface
case, there is a natural holomorphic projection
\[
\widetilde{f}_S\, :\, {\mathcal P}(S)\,\longrightarrow\, {\mathcal T}(S)
\]
that sends a projective structure to its underlying complex structure.
There is a natural holomorphic symplectic structure on ${\mathcal P}(S)$,
which we will denote by $\Omega^S_{\mathcal P}$.

By a Galois covering of a surface we will mean a covering map of it which is 
possibly ramified (locally isomorphic to $z\, \longmapsto\, z^n$ for some positive
integer $n$) such that the group of deck transformations acts transitively on
every fiber of the covering map.

To define a section of the above projection $\widetilde{f}_S$, we consider Bers'
section $B$ of an appropriate finite Galois cover of $S$. Then we average
$B$ over the Galois group $\Gamma$. This construction gives us a biholomorphism
\[
T_{S,B}\,:\, T^*\mathcal T(S) \,\longrightarrow\, \mathcal P(S)\, .
\]
Let $\Omega^S_{\mathcal T}$ denote the Liouville symplectic form on
$T^*\mathcal T(S)$. In Theorem 4.2 we show that this mapping $T_{S,B}$ preserves
the symplectic structures of the spaces, up to a constant:
\[
T^*_{S,B}\Omega^S_{\mathcal P}\,=\, \pi\cdot
\Omega^S_{\mathcal T}\, .
\]
Finally, we generalize this result to the biholomorphisms $T^*\mathcal T(S) \,
\longrightarrow\, \mathcal P(S)$ corresponding to Schottky's and Earle's sections.

\section{Orbifold Riemann surface and projective structure}

\subsection{Definition of orbifold projective structure}

Let ${\mathbb N}^{> 1}$ be the set of integers bigger than
one. An \textit{orbifold surface} is a triple
$(X\, ,{\mathbb D}\, , \varpi)$, where $X$ is a
compact connected oriented $C^\infty$ surface,
$$
{\mathbb D} \,:=\,\{x_1\, ,\cdots \, , x_n\}\,\subset\, X
$$
is a finite collection of distinct ordered points, and
\begin{equation}\label{vp}
\varpi\, :\, {\mathbb D} \,\longrightarrow \, {\mathbb N}^{> 1}
\end{equation}
is a function. Since the elements of ${\mathbb D}$ are ordered, $\varpi$
can be considered as a function on $\{1\, ,\cdots\, ,n\}$.

A \textit{coordinate function} on
$(X\, ,{\mathbb D}\, , \varpi)$
is a pair of the form $(V\, ,\phi)$, where
$V\,\subset {\mathbb C}{\mathbb P}^1$ is a connected open
subset, and
\begin{equation}\label{e1}
\phi\, :\, V\, \longrightarrow\, X
\end{equation}
is an orientation preserving $C^\infty$ open map, such that
$\# \phi(V)\bigcap {\mathbb D}\,\leq \, 1$, and
\begin{enumerate}
\item if $\phi(V)\bigcap {\mathbb D}\,=\,
\emptyset$, then $\phi$ is an embedding, and

\item if $\phi(V)\bigcap {\mathbb D}\, =\, x_i$,
then $\phi$ is a ramified Galois covering of $\phi(V)$ with Galois group
${\mathbb Z}/\varpi(x_i) {\mathbb Z}$, and it is totally ramified over $x_i$
but unramified over the complement $\phi(V)\setminus \{x_i\}$.
\end{enumerate}
The second condition implies that $\phi(V)$ can contain
at most one point of $\mathbb D$.

The group $\text{SL}(2, {\mathbb C})$ acts on ${\mathbb C}{\mathbb 
P}^1$; the action of any
$$
\begin{pmatrix}
a & b\\
c & d
\end{pmatrix} \, \in \, \text{SL}(2, {\mathbb C})
$$
sends any $z\,\in\, {\mathbb C}{\mathbb P}^1\,=\,
{\mathbb C}\cup\{\infty\}$ to $(az+b)/(cz+d)\,\in\, {\mathbb C}{\mathbb P}^1$. This
action of $\text{SL}(2,
{\mathbb C})$ factors through the quotient group $\text{PGL}(2, {\mathbb C})\,=\,
\text{SL}(2, {\mathbb C})/\pm I$. This way,
$\text{PGL}(2, {\mathbb C})$ gets identified with the
group of all holomorphic automorphisms of
${\mathbb C}{\mathbb P}^1$. The holomorphic automorphisms of
${\mathbb C}{\mathbb P}^1$ are also called M\"obius transformations.

A \textit{projective atlas} on $(X\, ,{\mathbb D}\, , \varpi)$
is a collection of coordinate functions
$\{(V_j\, ,\phi_j)\}_{j\in J}$ such that
\begin{enumerate}
\item $\bigcup_{j\in J} \phi_j(V_j)\, =\, X$,

\item for every $j\,\in\, J$, each deck transformation of the
Galois covering
$$
\phi_j\, :\, V_j\, \longrightarrow \phi_j(V_j)
$$
is the restriction of some M\"obius transformation (if
$\phi_j$ is an embedding, then this condition is automatically
satisfied because the Galois group is trivial), and

\item for every $j\, ,k\,\in\, J$, and for every
connected and simply connected open subset
$$
V\, \subset\,
\phi^{-1}_k((U_j\bigcap U_k)\setminus {\mathbb D})\, ,
$$
each branch of the function $\phi^{-1}_j\circ\phi_k$ over
$V$ is the restriction of some M\"obius transformation.
\end{enumerate}

By a branch of $\phi^{-1}_j\circ\phi_k$ over $V$ we mean a holomorphic
map $$f\, :\, V\, \longrightarrow\, {\mathbb C}{\mathbb P}^1$$
such that $\phi_k \,=\, \phi_j\circ f$. Note that if
$f\, :\, V\, \longrightarrow\, {\mathbb C}{\mathbb P}^1$ is continuous
and $$\phi_k \,=\, \phi_j\circ f\, ,$$ then $f$ is holomorphic.

In view of the second condition in the above definition of
projective structure, if some branch of the function 
$\phi^{-1}_j\circ\phi_k$ over
$V$ is the restriction of some M\"obius transformation, then
each branch of the function $\phi^{-1}_j\circ\phi_k$ over
$V$ is the restriction of some M\"obius transformation.

Two projective atlases $\{(V_j\, ,\phi_j)\}_{j\in J}$ and $\{(V_i\, ,\phi_i)
\}_{i\in I}$ will be called \textit{equivalent} if their union $\{(V_j\, ,
\phi_j)\}_{j\in J\cup I}$ is also a projective atlas.

\begin{definition}\label{def1}
{\rm A} projective structure {\rm on $(X\, ,{\mathbb D}\, , \varpi)$
is an equivalence class of projective atlases.}

{\rm Given a projective structure $P$ on $(X\, ,{\mathbb D}\, , 
\varpi)$, a coordinate function $(V\, ,\phi)$ is called}
compatible with $P$ {\rm if $(V\, ,\phi)$ lies in some
projective atlas in the equivalence class defined by $P$.}
\end{definition}

When the orbifold structure $({\mathbb D}\, , \varpi)$
on $X$ is clear from the context, a projective structure on
$(X\, ,{\mathbb D}\, , \varpi)$ will also
be called an \textit{orbifold projective structure} on $X$.

A projective structure on $(X\, ,{\mathbb D}\, , \varpi)$ produces
a complex structure on $X$. Indeed, this is an immediate consequence
of the following fact: if 
$$
{\mathbb C}{\mathbb P}^1 \,\supset \, V\, 
\stackrel{\phi}{\longrightarrow}\, X
$$
is a coordinate map (as in \eqref{e1}) such
that each deck transformation of $\phi$
is the restriction of some M\"obius transformation, then there is
a unique complex structure on $\phi(V)$ such that $\phi$ is
a holomorphic map.

An \textit{orbifold Riemann surface} is an orbifold surface
$(X\, ,{\mathbb D}\, , \varpi)$ such that $X$ is equipped with
a complex structure compatible with the orientation of $X$.

{}From the above observation that a projective structure produces
a complex structure it follows that a projective
structure on an orbifold surface produces an
orbifold Riemann surface.

Given an orbifold Riemann surface
$(X\, ,{\mathbb D}\, , \varpi)$, a projective structure $P$
on the orbifold surface $(X\, ,{\mathbb D}\, , \varpi)$ will be
called \textit{compatible} with the complex structure if the
complex structure on $X$ given by $P$ coincides with the given
complex structure on $X$. A compatible projective structure on the
orbifold Riemann surface
$(X\, ,{\mathbb D}\, , \varpi)$ will also be called a
\textit{projective structure} on the orbifold Riemann surface
$(X\, ,{\mathbb D}\, , \varpi)$.

When the orbifold structure $({\mathbb D}\, , \varpi)$ on the
Riemann surface $X$ is clear from the context, a projective
structure on the orbifold Riemann surface $(X\, ,{\mathbb D}\, , 
\varpi)$ will also be called an \textit{orbifold projective
structure on the Riemann surface $X$}.

We now recall Lemma 3.2 of \cite{Bi} on the existence of orbifold 
projective structures.

\begin{lemma}[\cite{Bi}]\label{lem1}
An orbifold Riemann surface $(X\, ,{\mathbb D}\, , \varpi)$
admits a compatible projective structure if and only if at least
one of the following three conditions are satisfied:
\begin{enumerate}
\item{} ${\rm genus}(X)\, \geq\, 1$,

\item $\# {\mathbb D} \, \notin \, \{1\, ,2\}$,

\item if $\# {\mathbb D} \,=\, 2$, then
$\varpi(x_1)\,=\,\varpi(x_2)$.
\end{enumerate}
Therefore, $(X\, ,{\mathbb D}\, , \varpi)$ does not
admit a compatible projective structure if and only if
either ${\rm genus}(X)\,=\, 0\, =\, n-1$ or
${\rm genus}(X)\,=\, 0\, =\, n-2$ with
$\varpi(x_1)\,\not=\,\varpi(x_2)$.
\end{lemma}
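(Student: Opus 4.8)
The plan is to reduce the orbifold statement to the corresponding classical fact about Riemann surfaces by passing to a finite Galois cover that uniformizes the orbifold. Concretely, given $(X\, ,{\mathbb D}\, , \varpi)$ satisfying one of the three conditions, one shows that there is a compact Riemann surface $\widehat{X}$ together with a finite group $\Gamma$ acting holomorphically on $\widehat{X}$, such that $\widehat{X}/\Gamma \,=\, X$ and the ramification data of the quotient map $\widehat{X}\,\to\, X$ is precisely prescribed by $\varpi$ over $\mathbb D$ (and unramified elsewhere). The existence of such a cover is a form of the Bundle-Theorem / Fenchel--Fox result on realizing two-orbifolds as quotients of surfaces by finite group actions; the conditions (1)--(3) are exactly the ``good orbifold'' conditions that exclude the bad footballs $S^2(p)$ and $S^2(p,q)$ with $p\neq q$. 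I would either cite this directly or, since we are working over $\mathbb{C}$, build $\widehat{X}$ by hand from a finite-index torsion-free subgroup of the Fuchsian (or Euclidean, in the genus-$1$ smooth case) group associated to the orbifold.

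Once such a $(\widehat{X}\, ,\Gamma)$ is in hand, the proof of the ``if'' direction goes as follows. The surface $\widehat{X}$, being a compact Riemann surface, carries a (compatible) projective structure $\widehat{P}$ — for instance the one coming from its own Fuchsian uniformization. Averaging $\widehat{P}$ over $\Gamma$ (in the affine space of projective structures on $\widehat{X}$, which is a torsor over the space of quadratic differentials, so averaging makes sense) produces a $\Gamma$-invariant projective structure $\widehat{P}_0$ on $\widehat{X}$. A $\Gamma$-invariant projective structure descends: away from the branch locus this is immediate, and at a branch point $x_i$ one checks that the local model is a coordinate function $(V\, ,\phi)$ with $\phi$ a cyclic branched cover of degree $\varpi(x_i)$ whose deck transformations are (conjugates of) the rotation $z\mapsto \zeta z$, which is a Möbius transformation — so the descended data satisfies conditions (2) and (3) in the definition of a projective atlas, hence defines a projective structure $P$ on $(X\, ,{\mathbb D}\, , \varpi)$ compatible with the given complex structure.

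For the ``only if'' direction one argues that conditions (1)--(3) are forced. The key local invariant is that near a branch point $x_i$, a coordinate function realizing a projective structure exhibits the cyclic deck group ${\mathbb Z}/\varpi(x_i){\mathbb Z}$ as a subgroup of $\mathrm{PGL}(2,{\mathbb C})$ fixing a point of ${\mathbb C}{\mathbb P}^1$, i.e.\ conjugate to rotations. This is always possible locally, so the obstruction is global: a projective structure on $(X\, ,{\mathbb D}\, , \varpi)$ gives a developing map and a monodromy representation of the orbifold fundamental group $\pi_1^{orb}(X\, ,{\mathbb D}\, , \varpi)$ into $\mathrm{PGL}(2,{\mathbb C})$, sending each small loop around $x_i$ to an element of order exactly $\varpi(x_i)$. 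When $\mathrm{genus}(X)=0$ and $n=1$, the orbifold fundamental group is ${\mathbb Z}/\varpi(x_1){\mathbb Z}$ but there is also a relation coming from the single puncture that forces the monodromy generator to be trivial, a contradiction; when $\mathrm{genus}(X)=0$, $n=2$ and $\varpi(x_1)\neq\varpi(x_2)$, the two puncture-loops are inverse to each other in $\pi_1^{orb}$, so their monodromies are inverse, forcing equal orders — contradiction. Conversely these are the only failures, matching the trichotomy.

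The main obstacle is the construction (or careful citation) of the finite uniformizing cover $\widehat{X}\to X$ with prescribed branching, together with checking that ``averaging over $\Gamma$'' interacts correctly with descent through the branch points — in particular that the averaged structure is genuinely $\Gamma$-invariant as a projective structure (not merely its underlying complex structure) and that its descent satisfies the Möbius-deck-transformation condition at each $x_i$. Everything else (local normal forms of finite subgroups of $\mathrm{PGL}(2,{\mathbb C})$, the monodromy/developing-map dictionary, the relation-counting in $\pi_1^{orb}$) is standard. I would remark that this lemma is exactly Lemma 3.2 of \cite{Bi}, so in the paper it suffices to quote it; the sketch above indicates why it is true.
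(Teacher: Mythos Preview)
The paper does not prove this lemma at all; it is stated with the citation \cite{Bi} (as Lemma~3.2 there) and no argument is supplied. You correctly note this at the end of your proposal, so in that sense your proposal matches the paper's treatment exactly.

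The sketch you provide in addition is sound, and --- pleasantly --- it uses the same machinery (the Bundgaard--Nielsen--Fox cover $\widehat{X}\to X$ with Galois group $\Gamma$, followed by averaging over $\Gamma$ in an affine space) that the paper itself deploys later in the proof of Proposition~\ref{prop1} and throughout Section~\ref{se4}. One minor simplification in your ``if'' direction: rather than averaging an arbitrary projective structure on $\widehat{X}$ over $\Gamma$, you can observe that the uniformizing (Fuchsian, Euclidean, or spherical) projective structure on $\widehat{X}$ is already $\Gamma$-invariant, since $\Gamma$ acts on $\widehat{X}$ by biholomorphisms and hence lifts to M\"obius transformations of the model geometry; this bypasses the averaging step. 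Your ``only if'' argument via the orbifold fundamental group and the required exact orders of the local monodromies around the $x_i$ is correct, though the phrasing for the teardrop case could be sharpened: the point is that $\pi_1^{orb}(S^2(p))$ is trivial (the loop around the single cone point is already null-homotopic in the complement), so no monodromy of order $p>1$ can arise.
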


\noindent
\textbf{Assumption A:}\, Henceforth, for all orbifold surfaces
considered, we assume that at least
one of the three conditions in Lemma \ref{lem1} is satisfied.

In view of Assumption A and Lemma \ref{lem1},
all orbifold Riemann surfaces considered henceforth admit
a projective structure.

\subsection{Parameter space for orbifold projective structures}

Let $S$ be a compact connected oriented surface of genus $g$. Fix
$n$ ordered points ${\mathbb D}\, :=\, \{x_1\, , \cdots\, , x_n\}$ on $S$.
Let ${\mathcal T}(S)$ be the Teichm\"uller space corresponding to
this $n$-pointed surface $S$. We recall a construction of ${\mathcal T}(S)$.
The space of all complex structures
on the smooth surface $S$ compatible with its orientation
will be denoted by ${\rm Com}(S)$. Let $\text{Diff}_{\mathbb D}(S)$ be the
group of all orientation preserving diffeomorphisms of $S$ that fix
the subset $\{x_1\, , \cdots\, , x_n\}$ pointwise. Let
$$
\text{Diff}^0_{\mathbb D}(S)\,\subset\, \text{Diff}_{\mathbb D}(S)
$$
be the subgroup consisting of all diffeomorphisms homotopic, fixing $\mathbb D$
pointwise, to the identity map of $S$. This group $\text{Diff}^0_{\mathbb D}(S)$
acts on ${\rm Com}(S)$; the action of any $f\, \in\, \text{Diff}^0(S)$ sends a
complex structure to its pullback using $f^{-1}$. The above Teichm\"uller space
${\mathcal T}(S)$ is the quotient
$$
{\mathcal T}(S)\, =\, {\rm Com}(S)/\text{Diff}^0_{\mathbb D}(S)\, .
$$
The space ${\rm Com}(S)$ has a natural complex structure which
induces a complex structure on ${\mathcal T}(S)$.

Let $\text{Proj}(S)$ denote the space of all projective structures
on $(S\, ,{\mathbb D}\, , \varpi)$. Consider the group of diffeomorphisms
$\text{Diff}^0_{\mathbb D}(S)$ defined above. This group has a
natural action on $\text{Proj}(S)$. The action of any $\tau \, \in\, 
\text{Diff}^0_{\mathbb D}(S)$ on $\text{Proj}(S)$ takes a projective 
structure $P$ to the one uniquely
determined by the following property: a coordinate function $(V\, 
,\phi)$ is compatible with this projective structure if and only if the
coordinate function $(V\, ,\tau^{-1}\circ\phi)$ is compatible with $P$.
Let
\begin{equation}\label{e2}
{\mathcal P}(S) \,:=\, \text{Proj}(S)/\text{Diff}^0_{\mathbb D}(S)
\end{equation}
be the quotient. There is a natural projection
\begin{equation}\label{e4}
\widetilde{f}_S\, :\, {\mathcal P}(S)\,\longrightarrow\, {\mathcal T}(S)
\end{equation}
that sends a projective structure to the complex structure underlying it.

There is a natural complex structure on $\text{Proj}(S)$ that induces a
complex structure on the quotient space ${\mathcal P}(S)$. An alternative way
of describing this complex structure on $\text{Proj}(S)$ is the following.
A projective structure on the orbifold $S$ defines a flat principal
$\text{PSL}(2, {\mathbb C})$--bundle on the complement $S\setminus \mathbb D$. Sending
a flat connection to its monodromy representation, the space
${\mathcal P}(S)$ gets identified with a submanifold of the smooth locus of
the representation space
$$\text{Hom}(\pi_1(S\setminus {\mathbb D}),\, \text{PSL}(2,
{\mathbb C}))/\text{PSL}(2, {\mathbb C})\, .$$ The smooth locus of
$\text{Hom}(\pi_1(S\setminus {\mathbb D}),\, \text{PSL}(2,
{\mathbb C}))/\text{PSL}(2, {\mathbb C})$ has a complex structure given by the
complex structure on $\text{PSL}(2, {\mathbb C})$, and the submanifold
${\mathcal P}(S)$ is preserved by the underlying almost complex structure.
Therefore, ${\mathcal P}(S)$ gets an induced complex structure. The projection
$\widetilde{f}_S$ in \eqref{e4} is holomorphic.

\begin{proposition}\label{prop1}
As before, let $g$ denote the genus of $S$.
The dimension of this complex manifold ${\mathcal P}(S)$ is
\begin{itemize}
\item $6g-6+2n$ 
if ${\rm genus}(S)\,\geq \, 2$,

\item $2n$ (respectively,
$2$) if ${\rm genus}(S)\, =\, 1$ with $n\,
>\, 0$ (respectively, $n\,=\,0$), and

\item $2(n-3)$ (respectively, $0$) if ${\rm genus}(S)\, =\, 0$ with
$n\, \geq\, 4$ (respectively, $n\,\leq\,3$).
\end{itemize}
\end{proposition}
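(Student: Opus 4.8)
The plan is to compute the dimension of $\mathcal{P}(S)$ by identifying it, via the monodromy construction described above, with an open subset of (the smooth locus of) the character variety $\mathrm{Hom}(\pi_1(S\setminus\mathbb{D}),\,\mathrm{PSL}(2,\mathbb{C}))/\mathrm{PSL}(2,\mathbb{C})$, but with prescribed conjugacy classes (of finite order $\varpi(x_i)$) around the punctures. Equivalently, and more cleanly, one uses the fibration $\widetilde{f}_S\colon\mathcal{P}(S)\to\mathcal{T}(S)$: since the discussion preceding the statement shows that, over a fixed orbifold Riemann surface $X$, the set of compatible projective structures is an affine space modelled on the space of orbifold quadratic differentials $H^0(X,\,K_X^{\otimes 2}\otimes\mathcal{O}(\sum(\varpi(x_i)-1)x_i))$ — or, more precisely, $\mathcal{P}(S)$ is the total space of the holomorphic cotangent bundle $T^*\mathcal{T}(S)$ once a section is chosen (this is exactly the content of the maps $T_{S,B}$ announced in the introduction) — we get
\[
\dim_{\mathbb{C}}\mathcal{P}(S)\;=\;2\dim_{\mathbb{C}}\mathcal{T}(S)\,.
\]
So the proposition reduces to recording the well-known dimension of the Teichm\"uller space of an $n$-pointed genus $g$ orbifold surface, together with a check of the low-genus exceptional cases.

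First I would recall that $\dim_{\mathbb{C}}\mathcal{T}(S)=3g-3+n$ whenever this number is positive, i.e. for $g\geq 2$ (any $n\geq 0$), for $g=1$ with $n\geq 1$, and for $g=0$ with $n\geq 4$; doubling gives $6g-6+2n$, $2n$, and $2(n-3)$ respectively, matching the three listed generic values. The point worth emphasising is that the orbifold weights $\varpi$ do \emph{not} change the \emph{dimension}: the cotangent fibre is the space of quadratic differentials with poles of bounded order at the $x_i$, but the Teichm\"uller space of the $n$-pointed surface (marked points, no weights) and of the weighted orbifold coincide as complex manifolds — one can see this either by a Riemann–Roch count on the orbifold canonical bundle, or by passing to a finite Galois cover $\Sigma\to S$ branched over $\mathbb{D}$ (as will be done systematically in Section 4) and taking $\Gamma$-invariants, noting that $\dim\mathcal{T}(S)=\dim\mathcal{T}(\Sigma)^\Gamma$ works out to the same $3g-3+n$.

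Next I would handle the degenerate cases where $3g-3+n\leq 0$. For $g=1$, $n=0$ the Teichm\"uller space is the upper half-plane, of dimension $1$, so $\dim\mathcal{P}(S)=2$; this is the stated value. For $g=0$ with $n\leq 3$, the Teichm\"uller space is a point (three marked points on $\mathbb{P}^1$ can be normalised to $0,1,\infty$, and fewer can be absorbed), so $\dim\mathcal{P}(S)=0$, again as stated. Here one must invoke Assumption A / Lemma 2.2 to know that $\mathcal{P}(S)$ is actually nonempty in the cases being tabulated — e.g. $g=0$, $n=3$ with arbitrary weights, or $g=0$, $n\leq 1$ which is excluded — so that "dimension $0$" means "a (nonempty) finite set of points" rather than the empty set.

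The only genuinely delicate point — and the one I would spell out with care — is the identification $\dim_{\mathbb{C}}\mathcal{P}(S)=2\dim_{\mathbb{C}}\mathcal{T}(S)$ in the presence of orbifold points, i.e. verifying that the fibres of $\widetilde{f}_S$ over $X$ are affine spaces over the cotangent space $T^*_{[X]}\mathcal{T}(S)$ and not over something larger coming from the extra freedom at the $x_i$. The resolution is that a coordinate function at $x_i$ must be a $\mathbb{Z}/\varpi(x_i)\mathbb{Z}$-equivariant local chart whose deck group acts by M\"obius transformations, which forces the local model to be a \emph{conjugate of the standard rotation} $z\mapsto e^{2\pi i/\varpi(x_i)}z$; this rigidity kills precisely the would-be extra parameters, so the Schwarzian-derivative difference of two compatible projective structures is a quadratic differential that is holomorphic on $X$ with at worst the poles allowed by the orbifold structure, and the space of these has dimension equal to $\dim\mathcal{T}(S)$ by Serre duality on the orbifold. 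Granting this, the three-case formula follows immediately.
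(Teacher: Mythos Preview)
Your outline is correct and lands on the same punchline as the paper --- namely $\dim_{\mathbb C}\mathcal P(S)=2\dim_{\mathbb C}\mathcal T(S)$ together with the standard list of $\dim\mathcal T(S)$ --- but the execution differs. The paper does \emph{not} work directly on the orbifold: it first invokes the Bundgaard--Nielsen--Fox theorem to produce a finite Galois cover $\psi\colon Y\to S$ with Galois group $\Gamma$, identifies $\mathcal P(S)=\mathcal P(Y)^\Gamma$ and $\mathcal T(S)=\mathcal T(Y)^\Gamma$, and then computes the fibre of $F_Y\colon\mathcal P(Y)^\Gamma\to\mathcal T(Y)^\Gamma$ as an affine space for $H^0(Z,K_Z^{\otimes 2})^\Gamma\cong H^0(Z/\Gamma,\,K_{Z/\Gamma}^{\otimes 2}\otimes\mathcal O_{Z/\Gamma}(\mathbb D))$, which Serre duality matches with the cotangent space of $\mathcal T(Y)^\Gamma$. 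Your direct orbifold argument is a legitimate alternative, but the paper's route via the smooth cover is not incidental: the entire remainder of the paper (Sections~3--5) is built on this $\Gamma$-equivariant framework, so the proof of the proposition doubles as the set-up.

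One concrete error to flag: the space you write as $H^0\bigl(X,\,K_X^{\otimes 2}\otimes\mathcal O(\sum(\varpi(x_i)-1)x_i)\bigr)$ is not the right model for the fibre. The correct space of invariant quadratic differentials descends to $H^0\bigl(X,\,K_X^{\otimes 2}\otimes\mathcal O_X(\mathbb D)\bigr)$, i.e.\ at most \emph{simple} poles at the orbifold points regardless of $\varpi(x_i)$; your divisor would, by Riemann--Roch, give dimension $3g-3+\sum(\varpi(x_i)-1)$ rather than $3g-3+n$. Since you do not actually carry that formula through (you instead appeal to Serre duality and the identification with $T^*_{[X]}\mathcal T(S)$, which is correct), the slip is harmless for the final count, but it should be corrected.
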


\begin{proof}
Since the two cases ${\rm genus}(S)\,=\, 0\, =\, n-1$ and
${\rm genus}(X)\,=\, 0\, =\, n-2$ with
$\varpi(x_1)\,\not=\,\varpi(x_2)$ are omitted (see Assumption A),
a theorem due to Bundgaard--Nielsen and Fox says that there is
a finite Galois covering
\begin{equation}\label{e3}
\psi\, :\, Y\, \longrightarrow\, S
\end{equation}
such that $\psi$ is unramified over $S\setminus\mathbb D$,
and for each $x_i\, \in\, {\mathbb D}$, the order of ramification
at every point of $\psi^{-1}(x_i)$ is $\varpi(x_i)$
\cite[p. 26, Proposition 1.2.12]{Na}, where $\varpi$ is the function
in \eqref{vp}. We call
the order of ramification at $0$ of the map $z\, \longmapsto\,
z^m$ to be $m$. Let $\widetilde g$ denote the genus of $Y$.

Let ${\rm Proj}_0(Y)$ denote the space of all projective structures
on the compact oriented surface $Y$ (for $Y$, the subset of
orbifold points is empty); the subscript ``$0$'' is to emphasize that
the orbifold structure on $Y$ is trivial. The 
space of all complex structures
on the smooth surface $Y$ compatible with its orientation
will be denoted by ${\mathcal C}(Y)$. There is a natural map
\begin{equation}\label{fpy}
f'_Y\, :\, {\rm Proj}_0(Y)\, \longrightarrow\, {\mathcal C}(Y)
\end{equation}
that sends a projective structure on $Y$ to the complex structure
on $Y$ defined by it.

Let ${\rm Diff}^0(Y)$ denote the
group of all diffeomorphisms of $Y$ homotopic to the identity
map of $Y$. The group ${\rm Diff}^0(Y)$ acts on both
${\rm Proj}(Y)$ and ${\mathcal C}(Y)$. Define
$$
{\mathcal P}(Y)\, :=\, {\rm Proj}_0(Y)/{\rm Diff}^0(Y)
~\,~\,\text{~and~}\,~\,~ {\mathcal T}(Y)\, :=\, {\mathcal C}(Y)
/{\rm Diff}^0(Y)\, .
$$
The quotient ${\mathcal T}(Y)$ is called the \textit{Teichm\"uller
space} for $Y$. It is a complex manifold of dimension $3{\widetilde g}-3$
or $1$ or $0$ depending on whether ${\widetilde g}\, \geq\, 2$ or ${\widetilde
g}\,=\, 1$ or ${\widetilde g}\,=\, 0$. Also, ${\mathcal T}(Y)$ is contractible
(diffeomorphic to the unit ball). The quotient ${\mathcal P}(Y)$ is a complex 
manifold of 
dimension $2\cdot\dim_{\mathbb C} {\mathcal T}(Y)$, and it is
also contractible. The map $f'_Y$ in
\eqref{fpy} descends to a projection
\begin{equation}\label{fy}
f_Y\, :\, {\mathcal P}(Y)\, \longrightarrow\, {\mathcal T}(Y)\, .
\end{equation}
This map $f_Y$ is a holomorphic submersion. More precisely, $f_Y$ makes
${\mathcal P}(Y)$ a holomorphic fiber bundle over
${\mathcal T}(Y)$. In fact, ${\mathcal P}(Y)$ is a torsor
for the holomorphic cotangent bundle $T^*{\mathcal T}(Y)$,
which means that for any $Z\, \in\, {\mathcal T}(Y)$
the vector space of $T^*_Z {\mathcal T}(Y)$ acts
freely and transitively on the fiber of $f_Y$ over the point $Z$. That
${\mathcal P}(Y)$ is a torsor for $T^*{\mathcal T}(Y)$ follows from the
facts that the space of all projective structure on
a given compact Riemann surface $Z$ is an affine space for
the space of quadratic differentials
$H^0(Z,\, T^*Z\otimes T^*Z)$, while the fiber of
$T^*{\mathcal T}(Y)$ at any point $Z\, \in\, {\mathcal T}(Y)$
is also $H^0(Z,\, T^*Z\otimes T^*Z)$.

Let
\begin{equation}\label{Ga}
\Gamma\, :=\, \text{Gal}(\psi)
\end{equation}
be the Galois group for the covering map $\psi$ in \eqref{e3}.
We will show that $\Gamma$ has a natural action on both
${\mathcal P}(Y)$ and ${\mathcal T}(Y)$.

Take any $T\,\in\, \Gamma$. Since $T$ is an orientation preserving
diffeomorphism of $Y$, it produces a self--map of $\text{Proj}_0(Y)$
that sends a projective structure $P$ to the one uniquely
determined by the following property: a coordinate function $(V\, 
,\phi)$ is compatible with this projective structure if and only if the
coordinate function $(V\, ,T^{-1}\circ\phi)$ is compatible with $P$.
This way we get an action of $\Gamma$ on $\text{Proj}_0(Y)$.
Similarly, $\Gamma$ acts on the space of complex structures ${\mathcal
C}(Y)$: the action of any $T\,\in\, \Gamma$ sends a complex structure
to the pullback of it by $T^{-1}$. The map $f'_Y$ in \eqref{fpy}
evidently intertwines these actions of $\Gamma$ on $\text{Proj}_0(Y)$
and ${\mathcal C}(Y)$.

Next we note that the conjugation action
$\Gamma$ on the group of diffeomorphisms of $Y$ preserves
${\rm Diff}^0(Y)$, meaning for any $T\, \in\, \Gamma$ and
$T'\,\in\,{\rm Diff}^0(Y)$, we have
$$
T^{-1}T'T \,\in\, {\rm Diff}^0(Y)\, .
$$
Therefore, the above actions of $\Gamma$ on
$\text{Proj}_0(Y)$ and ${\mathcal C}(Y)$ descend to actions of
$\Gamma$ on the quotient spaces ${\mathcal P}(Y)$ and ${\mathcal T}(Y)$
respectively. Since $f'_Y$ in \eqref{fpy} is $\Gamma$--equivariant, the
descended map $f_Y$ in \eqref{fy} is also $\Gamma$--equivariant.

{}From the construction of the actions of $\Gamma$ on
${\mathcal P}(Y)$ and ${\mathcal T}(Y)$ it follows that these
actions preserve the complex structures of
${\mathcal P}(Y)$ and ${\mathcal T}(Y)$.

The space ${\mathcal P}(S)$ in \eqref{e2} is the fixed point
locus
\begin{equation}\label{fix}
{\mathcal P}(S)\,=\, {\mathcal P}(Y)^\Gamma\,\subset\,{\mathcal P}(Y)
\end{equation}
for the action of $\Gamma$ on ${\mathcal P}(Y)$.
Indeed, the pullback to $Y$ of a projective structure on
$(S\, ,{\mathbb D}\, , \varpi)$ is a projective structure on
$Y$. This pulled back projective structure on $Y$ is
clearly invariant under the action of $\Gamma$. Conversely,
if we have a $\Gamma$--invariant projective structure $P$
on $Y$, then $P$ defines a projective structure $P'$ on
$(S\, ,{\mathbb D}\, , \varpi)$. A coordinate function
$\phi\, :\, V\, \longrightarrow\, S$, with $V$ simply connected, is
compatible with $P'$ if the lift $V\, \longrightarrow\, Y$ of $\phi$
is compatible with $P$; note that in view of the definition of a projective
structure on $(S\, ,{\mathbb D}\, , \varpi)$, the properties of the covering
map $\psi$ imply that $\phi$ lifts to a map to $Y$.

Since the action of $\Gamma$ on ${\mathcal P}(Y)$ preserves
the complex structure of ${\mathcal P}(Y)$, the fixed
point locus ${\mathcal P}(Y)^\Gamma$ is a complex submanifold
of ${\mathcal P}(Y)$.

To compute the dimension of the fixed point locus ${\mathcal P}(S)$,
we first note that the map $f_Y$ being $\Gamma$--equivariant
restricts to a map
\begin{equation}\label{Fy}
F_Y\,:=\, f_Y\vert_{{\mathcal P}(Y)^\Gamma}\, :\,{\mathcal P}(S)
\,=\, {\mathcal P}(Y)^\Gamma\,\longrightarrow\,
{\mathcal T}(Y)^\Gamma\, ,
\end{equation}
where ${\mathcal T}(Y)^\Gamma\,\subset\, {\mathcal T}(Y)$
is the fixed point locus for the action of $\Gamma$ on
${\mathcal T}(Y)$. We note that ${\mathcal T}(Y)^\Gamma$
is a complex submanifold because the action of $\Gamma$ preserves
the complex structure of ${\mathcal T}(Y)$.
Any $\Gamma$--invariant complex structure on $Y$ produces a complex structure
on $S$. On the other hand, any complex structure on $S$ defines a
$\Gamma$--invariant complex structure on $Y$. It is known that
${\mathcal T}(Y)^\Gamma$ is identified with the earlier defined Teichm\"uller
space ${\mathcal T}(S)$ for the $n$-pointed surface $S$ \cite{Har}.
In particular, $\dim {\mathcal T}(Y)^\Gamma$ coincides with the dimension of
${\mathcal T}(S)$. Therefore,
\begin{itemize}
\item if $\text{genus}(S)\, \geq\, 2$, then $\dim {\mathcal T}(Y)^\Gamma\,=\,
3g-3+n$,

\item if $\text{genus}(S)\, =\, 1$ and $n\,\geq \,1$, then $\dim {\mathcal T}
(Y)^\Gamma\,=\, n$,

\item if $\text{genus}(S)\, =\, 1$ and $n\,= \,0$, then $\dim {\mathcal T}
(Y)^\Gamma\,=\, 1$,

\item if $\text{genus}(S)\, =\, 0$ and $n\,\geq \,4$, then $\dim {\mathcal T}
(Y)^\Gamma\,=\, n-3$, and

\item if $\text{genus}(S)\, =\, 0$ and $n\,\leq \,3$, then $\dim {\mathcal T}
(Y)^\Gamma\,=\, 0$.
\end{itemize}

The map $F_Y$ in \eqref{Fy} is a holomorphic fiber bundle whose fiber
over any Riemann surface $Z\,\in\, {\mathcal T}(Y)^\Gamma$ is an affine
space for the complex vector space consisting of all $\Gamma$--invariant
holomorphic sections of $H^0(Z,\, K^{\otimes 2}_Z)$ (the space of all
$\Gamma$--invariant quadratic differentials on $Z$). Indeed, this follows from
the fact that the space of all $\Gamma$--invariant projective structures on
$Z$ is an affine space for $H^0(Z,\, K^{\otimes 2}_Z)^\Gamma$.

Take any point $$Z\,\in\, {\mathcal T}(Y)^\Gamma\, .$$ We consider
$Z$ as $Y$ equipped with a $\Gamma$--invariant complex structure. We have
$$
H^0(Z,\, K^{\otimes 2}_Z)^\Gamma\,=\, H^0(Z/\Gamma,\, K^{\otimes 2}_{Z/\Gamma}
\otimes {\mathcal O}_{Z/\Gamma}({\mathbb D}))\, .
$$
Using Serre duality, we have
$$
H^0(Z/\Gamma,\, K^{\otimes 2}_{Z/\Gamma}        
\otimes {\mathcal O}_{Z/\Gamma}({\mathbb D}))\,=\,
H^1(Z/\Gamma,\, T({Z/\Gamma})
\otimes {\mathcal O}_{Z/\Gamma}(-{\mathbb D}))^*\, ,
$$
where $T({Z/\Gamma})$ is the holomorphic tangent bundle of $Z/\Gamma$.
But $$H^1(Z/\Gamma,\, T({Z/\Gamma})
\otimes {\mathcal O}_{Z/\Gamma}(-{\mathbb D}))$$ is the holomorphic tangent
space to ${\mathcal T} (Y)^\Gamma$ at the point $Z/\Gamma\,\in\,
{\mathcal T} (Y)^\Gamma$. Hence
$\dim {\mathcal T} (Y)^\Gamma\,=\, \dim H^0(Z,\, K^{\otimes 2}_Z)^\Gamma$. Since
$$\dim {\mathcal P}(S)\,=\, \dim {\mathcal T} (Y)^\Gamma+
\dim H^0(Z,\, K^{\otimes 2}_Z)^\Gamma\, ,
$$
the proposition follows.
\end{proof}

\begin{remark}\label{rem1}
Take any $Z/\Gamma\, \in\, {\mathcal T}(S)$ as above. 
The fiber of the holomorphic cotangent bundle $T^*{\mathcal T}(S)$ over
$Z/\Gamma$ is identified with $H^0(Z,\, K^{\otimes 2}_Z)^\Gamma$. Therefore,
${\mathcal P}(S)$ is a holomorphic affine bundle over ${\mathcal T}(S)$ for the
holomorphic cotangent bundle $T^*{\mathcal T}(S)$.
\end{remark}

\section{Bers' Section}

Let us continue with the setting of the proof of Proposition \ref{prop1}:
we have a surface $Y$ and a finite group of diffeomorphisms $Y$, acting
on $Y$. An element $\gamma$ of $\Gamma$ induces a holomorphic automorphism of
$\mathcal T(Y)$ as well as of $\mathcal P(Y)$, which we denote by
\begin{equation}\label{im}
\gamma_{\mathcal T}\,:\,\mathcal T(Y)\,\longrightarrow\, \mathcal T(Y)~\ 
\text{ and }~\  \gamma_{\mathcal P}\,:\,\mathcal P(Y)\,\longrightarrow\,\mathcal P(Y)
\end{equation}
respectively. The mapping $f_Y$ of \eqref{fy} is $\Gamma$-equivariant,
which means that
\begin{equation}\label{fequivariant}
f_Y \circ \gamma_{\mathcal P} \,=\, \gamma_{\mathcal T}\circ f_Y\, .
\end{equation}

The above holomorphic mapping $\gamma_{\mathcal T}\,:\, \mathcal T(Y)\,\longrightarrow
\, \mathcal T(Y)$ induces in
a natural way a holomorphic self-map of the cotangent space
$$
d^*\gamma_{\mathcal T}\, :\, T^*\mathcal T(Y) \,\longrightarrow\, T^*\mathcal T(Y)\, .
$$
This space $T^*\mathcal T(Y)$, being the total space of the holomorphic cotangent
bundle of a complex manifold, has a natural holomorphic symplectic form,
which is known as the Liouville form. The Liouville symplectic form on
$T^*\mathcal T(Y)$ will be denoted by $\Omega_{\mathcal T}$. Since
$\gamma_{\mathcal T}$ is a biholomorphism of $\mathcal T(Y)$, it is easy to check
that the induced map $d^*\gamma_{\mathcal T}$ of $T^*\mathcal T(Y)$
preserves the form Liouville $\Omega_{\mathcal T}$.

On the other hand, the space $\mathcal P(Y)$ is mapped onto an open subset
of the smooth locus of the representation space
$$\text{Hom}(\pi_1(Y)\, , \text{PSL}(2,{\mathbb C}))/\text{PSL}(2,{\mathbb C})\, .$$
This map is a local biholomorphism.
Hence $\mathcal P(Y)$ has a natural holomorphic symplectic structure
\cite{AtBo}, \cite{Go}, \cite{AB}. We
will denote the holomorphic symplectic form on $\mathcal P(Y)$ by $\Omega_{\mathcal P}$.

\begin{lemma}\label{lem-spp}
The mapping $\gamma_{\mathcal P}$ in \eqref{im} preserves the symplectic
form $\Omega_{\mathcal P}$.
\end{lemma}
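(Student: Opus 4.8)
The plan is to use the description of $\Omega_{\mathcal P}$ as the Atiyah--Bott--Goldman symplectic form on an open subset of the moduli space of flat $\text{PSL}(2,{\mathbb C})$--connections on $Y$, to recognize $\gamma_{\mathcal P}$ as the operation of pulling back a flat connection by a diffeomorphism of $Y$, and then to conclude using the change of variables formula for an orientation preserving diffeomorphism.

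First I would make precise the identification of $\gamma_{\mathcal P}$ with a pullback map. A projective structure $P$ on $Y$ determines a developing map on the universal cover of $Y$ together with a monodromy homomorphism $\rho_P\,:\,\pi_1(Y)\,\longrightarrow\,\text{PSL}(2,{\mathbb C})$, well defined up to conjugacy; recall that the complex structure on $\mathcal P(Y)$ is exactly the one for which $P\,\longmapsto\,[\rho_P]$ realizes $\mathcal P(Y)$ as an open subset of the smooth locus of $\text{Hom}(\pi_1(Y),\text{PSL}(2,{\mathbb C}))/\text{PSL}(2,{\mathbb C})$. Since a coordinate function $(V,\phi)$ is compatible with $\gamma_{\mathcal P}(P)$ precisely when $(V,\gamma^{-1}\circ\phi)$ is compatible with $P$, the charts of $\gamma_{\mathcal P}(P)$ are the maps $\gamma\circ\phi$ with $\phi$ a chart of $P$; unwinding the construction of the developing map shows that a developing map for $\gamma_{\mathcal P}(P)$ is obtained by precomposing a developing map for $P$ with a lift of $\gamma^{-1}$ to the universal cover, whence the monodromy of $\gamma_{\mathcal P}(P)$ equals $\rho_P\circ(\gamma_\#)^{-1}$ up to conjugacy, where $\gamma_\#$ is the automorphism of $\pi_1(Y)$ induced by $\gamma$ (defined up to an inner automorphism). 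Equivalently, the flat $\text{PSL}(2,{\mathbb C})$--bundle with connection underlying $\gamma_{\mathcal P}(P)$ is the pullback by $\gamma^{-1}$ of the one underlying $P$. The inner ambiguity in $\gamma_\#$ plays no role, since inner automorphisms act trivially on the character variety.

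Next, recall the construction of $\Omega_{\mathcal P}$. Fix once and for all a nondegenerate $\text{Ad}$--invariant symmetric bilinear form $B$ on the Lie algebra $\mathfrak{psl}(2,{\mathbb C})$ of $\text{PSL}(2,{\mathbb C})$. For a flat connection $\nabla$ on a principal $\text{PSL}(2,{\mathbb C})$--bundle over $Y$, with flat adjoint bundle $\text{ad}\,E_\nabla$, the tangent space to the moduli space at $[\nabla]$ is the de Rham cohomology $H^1(Y,\text{ad}\,E_\nabla)$, and
\[
\Omega_{\mathcal P}(a,b)\,=\,\int_Y B(a\wedge b)\, ,
\]
where $B(a\wedge b)$ is the ordinary $2$--form on $Y$ obtained by wedging the $\text{ad}\,E_\nabla$--valued $1$--forms $a$, $b$ and applying $B$ fibrewise; this is the form of \cite{AtBo}, \cite{Go}, and its restriction to the open subset $\mathcal P(Y)$ is $\Omega_{\mathcal P}$. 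Put $\Phi\,:=\,\gamma^{-1}$, which is again an element of $\Gamma$, hence an orientation preserving self--diffeomorphism of $Y$. By the previous paragraph, the differential of $\gamma_{\mathcal P}$ at $[\nabla]$ is the canonical isomorphism $\Phi^*\,:\,H^1(Y,\text{ad}\,E_\nabla)\,\longrightarrow\,H^1(Y,\text{ad}\,E_{\Phi^*\nabla})$ induced by $\Phi$. Since $\Phi^*a\wedge\Phi^*b\,=\,\Phi^*(a\wedge b)$ and the fibrewise contraction uses the single fixed form $B$, one has $B(\Phi^*a\wedge\Phi^*b)\,=\,\Phi^*\big(B(a\wedge b)\big)$ as $2$--forms on $Y$, and therefore
\[
\Omega_{\mathcal P}(\Phi^*a,\Phi^*b)\,=\,\int_Y\Phi^*\big(B(a\wedge b)\big)\,=\,\int_Y B(a\wedge b)\,=\,\Omega_{\mathcal P}(a,b)\, ,
\]
the middle equality being the change of variables formula for the orientation preserving diffeomorphism $\Phi$. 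This establishes $\gamma_{\mathcal P}^*\Omega_{\mathcal P}\,=\,\Omega_{\mathcal P}$.

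The only delicate point is the identification carried out in the second paragraph, namely checking that the combinatorially defined automorphism $\gamma_{\mathcal P}$ of $\mathcal P(Y)$ agrees, after passing to monodromy, with genuine pullback of flat bundles by a diffeomorphism of $Y$; everything after that is just naturality of the wedge (equivalently cup) product together with orientation preservation. Alternatively, one can simply invoke that the Atiyah--Bott--Goldman form on the $\text{PSL}(2,{\mathbb C})$--character variety of the oriented surface $Y$ is invariant under the natural action of the mapping class group of $Y$ --- an orientation preserving self--homeomorphism fixes the fundamental class in $H_2(\pi_1(Y),{\mathbb Z})$, out of which, together with the cup product and $B$, the Goldman pairing is built --- and that $\gamma$ defines such a mapping class.
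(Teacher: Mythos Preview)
Your proof is correct and follows essentially the same approach as the paper: identify $\gamma_{\mathcal P}$ with the action on the $\text{PSL}(2,{\mathbb C})$--character variety induced by the orientation preserving diffeomorphism $\gamma$ of $Y$, and then invoke that the Atiyah--Bott--Goldman form is invariant under such actions. The paper simply asserts the last step ``follows immediately from the construction of $\Omega$'' with a reference to \cite{AtBo} and \cite{Go}, whereas you spell it out via the de Rham integral description and change of variables (and also mention the cohomological alternative via the fundamental class); this is the same argument, just made more explicit.
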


\begin{proof}
Let $Z$ be a compact connected oriented surface. Fix a base
point $z_0\,\in\, Z$. Let $G$ be a semisimple Lie
group. Consider $\text{Hom}(\pi_1(Z,z_0)\, , G)/G$ equipped with the
natural symplectic form $\Omega$ (see \cite{AtBo} and \cite{Go} for $\Omega$). Let
$$
\alpha\,:\, Z\, \longrightarrow\, Z
$$
be an orientation preserving diffeomorphism. The
isomorphism
$$
\alpha_*\,:\, \pi_1(Z,\, z_0)\, \longrightarrow\, \pi_1(Z,\,\alpha(z_0))
$$
induced by $\alpha$ produces a diffeomorphism
$$
\alpha'\,:\, \text{Hom}(\pi_1(Z,z_0)\, , G)/G\, \longrightarrow\,
\text{Hom}(\pi_1(Z,\,\alpha(z_0))\, , G)/G\, .
$$
By choosing a path from $z_0$ to $\alpha(z_0)$, the group
$\pi_1(Z,\,\alpha(z_0))$ is naturally identified with $\pi_1(Z,\,z_0)$ up
to an inner automorphism of $\pi_1(Z,\,z_0)$. Therefore, $\alpha'$ produces a
diffeomorphism
$$
\widetilde{\alpha}\,:\, \text{Hom}(\pi_1(Z,\,z_0)\, , G)/G\, \longrightarrow\,
\text{Hom}(\pi_1(Z,\,z_0)\, , G)/G\, .
$$
This diffeomorphism $\widetilde{\alpha}$ preserves $\Omega$. Indeed, this
follows immediately from the construction of $\Omega$ (see
\cite{AtBo} and \cite{Go}). We mentioned earlier
that $\Omega_{\mathcal P}$ coincides with the above symplectic form
$\Omega$ for $G\,=\, \text{PGL}(2,{\mathbb C})$. Therefore, we now conclude
that $\gamma_{\mathcal P}$ in \eqref{im} preserves $\Omega_{\mathcal P}$.
\end{proof}

The projection $f_Y$ in \eqref{fy} has a holomorphic section constructed by Bers
using the notion of simultaneous uniformization \cite{Bers}. We will denote that
section by $B$:
\begin{equation}\label{bers-section}
B\,:\,\mathcal T(Y) \,\longrightarrow\, \mathcal P(Y)\, .
\end{equation}
Let
\[
T_B\,:\, T^*{\mathcal T}(Y) \,\longrightarrow\,  \mathcal P(Y)
\]
be the holomorphic mapping that sends any $(Z\, ,\theta)\, \in\,
T^*{\mathcal T}(Y)$ to $B(Z)+\theta$; note that $\theta\,\in\,
H^0(Z,\, K^{\otimes 2}_Z)$ and the fiber of ${\mathcal P}(Y)$ over
$Z\,\in\, {\mathcal T}(Y)$ is an affine space for $H^0(Z,\, K^{\otimes 2}_Z)$,
so $B(Z)+\theta$ is also an element of the fiber of ${\mathcal P}(Y)$ over $Z$.
This map $T_B$ is clearly a biholomorphism
In \cite {ka}, Kawai proved that $T_B$ preserves
the symplectic structures of $\mathcal T(Y)$ and $\mathcal P(Y)$ in the
sense that
\begin{equation}\label{kt}
\frac{1}{\pi}\cdot T_B^*\Omega_{\mathcal P} \,=\, \Omega_{\mathcal T}\, .
\end{equation}

For an element $\gamma$ of $\Gamma$, we define a holomorphic mapping
\begin{equation}\label{bg}
B_\gamma\,:\,{\mathcal T}(Y) \,\longrightarrow\, {\mathcal P}(Y)\, ,~\
B_\gamma \,:=\, \gamma_{\mathcal P}\circ B\circ \gamma_{\mathcal T}^{-1}\, .
\end{equation}
We note that the following diagram is commutative
$$
\begin{matrix}
{\mathcal P}(Y) & \stackrel{\gamma_{\mathcal P}}{\longrightarrow} &
{\mathcal P}(Y)\\
~\Big\downarrow f_Y && ~\Big\downarrow f_Y\\
{\mathcal T}(Y) & \stackrel{\gamma_{\mathcal T}}{\longrightarrow} &
{\mathcal T}(Y)
\end{matrix}
$$
Therefore, $B_\gamma$ in \eqref{bg} is also a holomorphic section of
the projection $f_Y$.

Let
\begin{equation}\label{tbg}
T_{B_\gamma}\,:\, T^*{\mathcal T}(Y) \,\longrightarrow\,  \mathcal P(Y)
\end{equation}
be the biholomorphism that sends any $(Z\, ,\theta)\, \in\,
T^*{\mathcal T}(Y)$ to $B_\gamma(Z)+\theta$.

\begin{lemma}\label{le-sy}
For the above map $T_{B_\gamma}$ the following holds:
$$
T^*_{B_\gamma}\Omega_{\mathcal P} \,=\, \pi\cdot \Omega_{\mathcal T}\, .
$$
\end{lemma}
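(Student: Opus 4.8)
The plan is to deduce Lemma \ref{le-sy} from Kawai's result \eqref{kt} by exploiting the $\Gamma$-equivariance built into the definition of $B_\gamma$ in \eqref{bg}. The key observation is that $T_{B_\gamma}$ factors through $T_B$ composed with the lifts of $\gamma_{\mathcal T}$ and $\gamma_{\mathcal P}$ to the cotangent bundle and to $\mathcal P(Y)$ respectively.

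First I would establish the identity
\begin{equation}\label{factor}
T_{B_\gamma} \,=\, \gamma_{\mathcal P}\circ T_B \circ d^*\gamma_{\mathcal T}^{-1}\, ,
\end{equation}
as maps $T^*\mathcal T(Y)\,\longrightarrow\,\mathcal P(Y)$. To check this, take $(Z,\theta)\in T^*\mathcal T(Y)$. Writing $W\,=\,\gamma_{\mathcal T}^{-1}(Z)$, the point $d^*\gamma_{\mathcal T}^{-1}(Z,\theta)$ is $(W,\eta)$ where $\eta$ is the pullback of $\theta$ under the differential of $\gamma_{\mathcal T}$ at $W$; then $T_B(W,\eta)\,=\,B(W)+\eta$, and applying $\gamma_{\mathcal P}$ gives $\gamma_{\mathcal P}(B(W))+\gamma_{\mathcal P}(\eta)$. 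The first term is $\gamma_{\mathcal P}(B(\gamma_{\mathcal T}^{-1}(Z)))\,=\,B_\gamma(Z)$ by definition. For the second term one uses that, because $f_Y$ is $\Gamma$-equivariant and $\mathcal P(Y)$ is a torsor over $T^*\mathcal T(Y)$, the action of $\gamma_{\mathcal P}$ on the affine fibers is precisely the linear action of $d^*\gamma_{\mathcal T}$ on the fibers of $T^*\mathcal T(Y)$; that is, $\gamma_{\mathcal P}(p+\xi)\,=\,\gamma_{\mathcal P}(p)+d^*\gamma_{\mathcal T}(\xi)$ for $p$ in the fiber over $W$ and $\xi\in H^0(W,K^{\otimes2}_W)$. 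Hence $\gamma_{\mathcal P}(\eta)\,=\,d^*\gamma_{\mathcal T}$ applied to the pullback of $\theta$, which is again $\theta$ sitting in the fiber over $Z$. This yields $T_{B_\gamma}(Z,\theta)\,=\,B_\gamma(Z)+\theta$, which is \eqref{factor}.

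Granting \eqref{factor}, the lemma follows by pulling back $\Omega_{\mathcal P}$ through the composition. By Lemma \ref{lem-spp}, $\gamma_{\mathcal P}^*\Omega_{\mathcal P}\,=\,\Omega_{\mathcal P}$; by Kawai's theorem \eqref{kt}, $T_B^*\Omega_{\mathcal P}\,=\,\pi\cdot\Omega_{\mathcal T}$; and since $d^*\gamma_{\mathcal T}$ is the cotangent lift of a biholomorphism of $\mathcal T(Y)$, it preserves the Liouville form, so $(d^*\gamma_{\mathcal T}^{-1})^*\Omega_{\mathcal T}\,=\,\Omega_{\mathcal T}$ (this is noted already in the text before Lemma \ref{lem-spp}). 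Chaining these,
\begin{equation}\label{chain}
T_{B_\gamma}^*\Omega_{\mathcal P}\,=\,(d^*\gamma_{\mathcal T}^{-1})^*\, T_B^*\, \gamma_{\mathcal P}^*\,\Omega_{\mathcal P}\,=\,(d^*\gamma_{\mathcal T}^{-1})^*\, T_B^*\,\Omega_{\mathcal P}\,=\,(d^*\gamma_{\mathcal T}^{-1})^*(\pi\cdot\Omega_{\mathcal T})\,=\,\pi\cdot\Omega_{\mathcal T}\, .
\end{equation}

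The main obstacle — really the only non-formal point — is verifying the compatibility of the $\gamma_{\mathcal P}$-action with the torsor structure that was used in \eqref{factor}, namely that $\gamma_{\mathcal P}$ acts on fibers affinely with linear part $d^*\gamma_{\mathcal T}$. This should be traced back to how the affine structure on $\mathrm{Proj}_0(Z)$ over $H^0(Z,K^{\otimes2}_Z)$ transforms under a biholomorphism: if $\alpha\colon Z_1\to Z_2$ is a biholomorphism and $P$ is a projective structure on $Z_2$, then for a quadratic differential $q$ on $Z_2$ one has $\alpha^*(P+q)\,=\,\alpha^*P+\alpha^*q$, simply because the Schwarzian derivative is additive under the relevant operations and is natural with respect to composition. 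Once this naturality of the affine action is recorded, everything else in the argument is bookkeeping.
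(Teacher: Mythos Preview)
Your proof is correct and follows essentially the same route as the paper: the paper records your factorization \eqref{factor} as the commutative square $\gamma_{\mathcal P}\circ T_B = T_{B_\gamma}\circ d^*\gamma_{\mathcal T}$ and then invokes Lemma \ref{lem-spp}, Kawai's identity \eqref{kt}, and the invariance of $\Omega_{\mathcal T}$ under $d^*\gamma_{\mathcal T}$, exactly as in your chain \eqref{chain}. The only difference is that the paper simply asserts the commutativity of the diagram, whereas you spell out the verification via the naturality of the affine action of quadratic differentials under pullback --- a welcome clarification of a point the paper leaves implicit.
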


\begin{proof}
The following diagram of holomorphic maps is commutative
\begin{equation}\label{dc2}
\begin{matrix}
T^*{\mathcal T}(Y)& \stackrel{T_B}{\longrightarrow} &
{\mathcal P}(Y)\\
~\Big\downarrow d^*\gamma_{\mathcal T} && ~\Big\downarrow\gamma_{\mathcal P}\\
T^*{\mathcal T}(Y) & \stackrel{T_{B_\gamma}}{\longrightarrow} &
{\mathcal P}(Y)
\end{matrix}
\end{equation}
We noted earlier that $d^*\gamma_{\mathcal T}$ preserves the Liouville symplectic
form $\Omega_{\mathcal T}$ because $d^*\gamma_{\mathcal T}$ is induced by a
biholomorphism of ${\mathcal T}(Y)$. From Lemma \ref{lem-spp} we know that
$\gamma_{\mathcal P}$ preserves $\Omega_{\mathcal P}$. Therefore, in view of the
commutative diagram in \eqref{dc2}, from \eqref{kt} we conclude that
$T^*_{B_\gamma}\Omega_{\mathcal P} \,=\, \pi\cdot \Omega_{\mathcal T}$.
\end{proof}

Since the fibers of $f_Y$ are affine spaces for the fibers of the
holomorphic cotangent bundle of ${\mathcal T}(Y)$, we have
$$
B_\gamma - B\, \in\, H^0({\mathcal T}(Y),\, T^*{\mathcal T}(Y))\, ,
$$
in other words, $B_\gamma - B$ is a holomorphic one-form on ${\mathcal T}(Y)$.

\begin{lemma}\label{cor-s1}
The above holomorphic one-form $B_\gamma - B$ on ${\mathcal T}(Y)$ is closed.
\end{lemma}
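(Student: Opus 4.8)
The plan is to exploit the two symplectic identities we already have in hand: Kawai's formula $T_B^*\Omega_{\mathcal P} = \pi\cdot\Omega_{\mathcal T}$ (equation \eqref{kt}) and the analogous formula $T_{B_\gamma}^*\Omega_{\mathcal P} = \pi\cdot\Omega_{\mathcal T}$ from Lemma \ref{le-sy}. The key observation is that both $T_B$ and $T_{B_\gamma}$ are maps from $T^*\mathcal T(Y)$ to $\mathcal P(Y)$ which agree with each other after a fiberwise translation: writing $\eta := B_\gamma - B \in H^0(\mathcal T(Y),\, T^*\mathcal T(Y))$, one has $T_{B_\gamma}(Z,\theta) = T_B(Z,\theta + \eta(Z))$. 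In other words, $T_{B_\gamma} = T_B \circ \tau_\eta$, where $\tau_\eta \colon T^*\mathcal T(Y) \to T^*\mathcal T(Y)$ is the fiberwise translation $(Z,\theta)\mapsto(Z,\theta+\eta(Z))$.

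First I would record the standard fact about how translation by a one-form acts on the Liouville form: if $\sigma$ denotes the tautological one-form on $T^*\mathcal T(Y)$, then $\tau_\eta^*\sigma = \sigma + p^*\eta$, where $p \colon T^*\mathcal T(Y)\to\mathcal T(Y)$ is the projection. (This is the usual computation: at a point $(Z,\theta)$, the tautological form evaluated on a tangent vector $v$ is $\theta(dp(v))$, and after translation by $\eta(Z)$ the value becomes $(\theta+\eta(Z))(dp(v)) = \sigma_{(Z,\theta)}(v) + (p^*\eta)_{(Z,\theta)}(v)$.) Consequently $\tau_\eta^*\Omega_{\mathcal T} = \tau_\eta^*(d\sigma) = d\sigma + d(p^*\eta) = \Omega_{\mathcal T} + p^*(d\eta)$.

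Now I would combine the pieces. From $T_{B_\gamma} = T_B\circ\tau_\eta$ we get
\[
\pi\cdot\Omega_{\mathcal T} \;=\; T_{B_\gamma}^*\Omega_{\mathcal P} \;=\; \tau_\eta^*\big(T_B^*\Omega_{\mathcal P}\big) \;=\; \tau_\eta^*\big(\pi\cdot\Omega_{\mathcal T}\big) \;=\; \pi\cdot\Omega_{\mathcal T} + \pi\cdot p^*(d\eta)\, ,
\]
using Lemma \ref{le-sy} in the first equality, functoriality of pullback in the second, Kawai's formula \eqref{kt} in the third, and the translation computation in the fourth. Cancelling $\pi\cdot\Omega_{\mathcal T}$ yields $p^*(d\eta) = 0$, and since $p$ is a submersion (indeed $p^*$ is injective on forms), this forces $d\eta = 0$; that is, $B_\gamma - B$ is closed.

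The main obstacle here is not conceptual but one of bookkeeping: one must be careful that the affine structure on the fibers of $f_Y$ (addition of quadratic differentials to projective structures) is identified correctly with the fiberwise vector-space structure on $T^*\mathcal T(Y)$, so that the relation $T_{B_\gamma} = T_B\circ\tau_\eta$ genuinely holds with $\tau_\eta$ being fiberwise \emph{linear} translation by the section $\eta$. This is exactly the torsor/affine identification recorded in the proof of Proposition \ref{prop1}, so once that identification is invoked, the argument above goes through verbatim. (One could alternatively prove closedness directly by observing that $\Omega_{\mathcal P}$ is closed and that the difference of two Lagrangian-type sections pulls back the Liouville form to differ by the exact-versus-closed comparison, but routing through \eqref{kt} and Lemma \ref{le-sy} is cleaner.)
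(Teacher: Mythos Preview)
Your proof is correct and is essentially identical to the paper's own argument: the paper introduces the fiberwise translation $\beta(v)=v+(B_\gamma-B)(p(v))$ (your $\tau_\eta$), uses $T_B\circ\beta=T_{B_\gamma}$ together with \eqref{kt} and Lemma~\ref{le-sy} to conclude $\beta^*\Omega_{\mathcal T}=\Omega_{\mathcal T}$, and then invokes the identity $\beta^*\Omega_{\mathcal T}-\Omega_{\mathcal T}=p^*d(B_\gamma-B)$ to finish. Your version merely spells out a bit more explicitly the tautological-form computation and the injectivity of $p^*$.
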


\begin{proof}
Let
$$
\beta\, :\,  T^*{\mathcal T}(Y)\,\longrightarrow\, T^*{\mathcal T}(Y)
$$
be the holomorphic automorphism of the fiber bundle
\begin{equation}\label{p}
p\, :\, T^*{\mathcal T}(Y)\,\longrightarrow\, {\mathcal T}(Y)
\end{equation}
defined by $v\, \longmapsto\, v+(B_\gamma - B)(p(v))$.
Clearly, we have
$$
T_B\circ\beta\,=\, T_{B_\gamma}\, .
$$
Therefore, from \eqref{kt} and Lemma \ref{le-sy} it follows that
$$
\pi\cdot{\Omega}_T\,=\, (T_{B_\gamma})^*{\Omega}_P\,=\,
(T_B\circ\beta)^*{\Omega}_P\,=\, \beta^*(T_B)^*{\Omega}_P\,=\,\pi\cdot
\beta^*{\Omega}_T\, .
$$
Hence $\beta^* \Omega_{\mathcal T}-  \Omega_{\mathcal T}\,=\, 0$.
On the other hand, from the construction of $\Omega_{\mathcal T}$ it follows
immediately that
$$
\beta^* \Omega_{\mathcal T}- \Omega_{\mathcal T}\,=\, p^*d(B_\gamma - B)\, ,
$$
where $p$ is the projection in \eqref{p}. Combining these
two we conclude that the form $B_\gamma - B$ is closed.
\end{proof}

Since the fibers of $f_Y$ (see \eqref{fy}) are affine spaces for the fibers of $T^*
{\mathcal T}(Y)$, and $B_\gamma$ is a holomorphic section of $f_Y$, we conclude that
\begin{equation}\label{bp}
B'\,:=\, \frac{1}{\# \Gamma} \sum_{\gamma\in\Gamma} B_\gamma
\end{equation}
is a holomorphic section of $f_Y$, where $\# \Gamma$ is the order of
the group $\Gamma$.
Let
\begin{equation}\label{bp2}
T_{B'}\,:\, T^*{\mathcal T}(Y) \,\longrightarrow\,  \mathcal P(Y)
\end{equation}
be the holomorphic isomorphism that sends any $$(Z\, ,\theta)\, \in\,   
T^*{\mathcal T}(Y)$$ to $B'(Z)+\theta$.

\begin{proposition}\label{pr-sy}
For the above map $T_{B'}$, the following holds:
$$
T^*_{B'}\Omega_{\mathcal P} \,=\, \pi\cdot \Omega_{\mathcal T}\, .
$$
\end{proposition}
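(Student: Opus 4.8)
The plan is to reduce Proposition \ref{pr-sy} to Kawai's identity \eqref{kt} by the same fiberwise-translation argument that was used in the proof of Lemma \ref{cor-s1}. First I would note that, since each $B_\gamma$ is a holomorphic section of $f_Y$ and the fibers of $f_Y$ are affine spaces for the fibers of $T^*{\mathcal T}(Y)$, the difference
\[
B' - B \,=\, \frac{1}{\#\Gamma}\sum_{\gamma\in\Gamma}(B_\gamma - B)
\]
is a holomorphic one-form on ${\mathcal T}(Y)$. By Lemma \ref{cor-s1} each summand $B_\gamma - B$ is a closed holomorphic one-form, and a (finite) convex combination of closed forms is again closed, so $B' - B$ is a closed holomorphic one-form on ${\mathcal T}(Y)$.

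Next, mirroring the proof of Lemma \ref{cor-s1}, I would introduce the holomorphic automorphism
\[
\beta'\,:\, T^*{\mathcal T}(Y)\,\longrightarrow\, T^*{\mathcal T}(Y),\qquad v\,\longmapsto\, v + (B'-B)(p(v)),
\]
of the fiber bundle $p$ in \eqref{p}, which satisfies $T_B\circ\beta' = T_{B'}$ directly from the definitions of $T_B$ and $T_{B'}$. Translating the tautological one-form on $T^*{\mathcal T}(Y)$ fiberwise by the one-form $B'-B$ changes it by $p^*(B'-B)$, so taking exterior derivatives gives
\[
\beta'^*\Omega_{\mathcal T} - \Omega_{\mathcal T} \,=\, p^*d(B'-B),
\]
exactly as in Lemma \ref{cor-s1}; since $B'-B$ is closed, the right-hand side vanishes, and hence $\beta'$ preserves the Liouville form $\Omega_{\mathcal T}$.

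Finally I would combine these observations with \eqref{kt}:
\[
T^*_{B'}\Omega_{\mathcal P} \,=\, (T_B\circ\beta')^*\Omega_{\mathcal P} \,=\, \beta'^*\bigl(T_B^*\Omega_{\mathcal P}\bigr) \,=\, \beta'^*\bigl(\pi\cdot\Omega_{\mathcal T}\bigr) \,=\, \pi\cdot\beta'^*\Omega_{\mathcal T} \,=\, \pi\cdot\Omega_{\mathcal T},
\]
which is the asserted equality. I do not expect a genuine obstacle here: all the substantive input — the $\Gamma$-invariance of $\Omega_{\mathcal P}$ (Lemma \ref{lem-spp}), the identity $T^*_{B_\gamma}\Omega_{\mathcal P} = \pi\cdot\Omega_{\mathcal T}$ (Lemma \ref{le-sy}), and the closedness of $B_\gamma - B$ (Lemma \ref{cor-s1}) — has already been established, so the only points requiring care are that averaging preserves closedness and that the translation formula for the Liouville form is applied correctly to the averaged one-form $B'-B$ rather than to an individual $B_\gamma - B$.
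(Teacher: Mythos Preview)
Your argument is correct and is essentially identical to the paper's own proof: the paper also introduces the averaged one-form $\omega = \frac{1}{\#\Gamma}\sum_{\gamma}(B_\gamma - B) = B'-B$, defines the same translation automorphism $\beta'$ with $T_B\circ\beta' = T_{B'}$, uses Lemma \ref{cor-s1} to get $d\omega = 0$ and hence $(\beta')^*\Omega_{\mathcal T}=\Omega_{\mathcal T}$, and then applies \eqref{kt}. The only cosmetic difference is notation; note also that Lemmas \ref{lem-spp} and \ref{le-sy} are not invoked directly here but only through Lemma \ref{cor-s1}.
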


\begin{proof}
Let
\begin{equation}\label{o}
\omega\, :=\, \frac{1}{\# \Gamma} \sum_{\gamma\in\Gamma}(B_\gamma - B)
\end{equation}
be the holomorphic one-form on ${\mathcal T}(Y)$, where $B_\gamma - B$ is the
one-form in Lemma \ref{cor-s1}.
Let
$$
\beta'\, :\,  T^*{\mathcal T}(Y)\,\longrightarrow\, T^*{\mathcal T}(Y)
$$
be the holomorphic automorphism of the fiber bundle $T^*{\mathcal T}(Y)$
defined by $v\, \longmapsto\, v+\omega(p(v))$, where $p$
is the projection in \eqref{p}. Clearly, we have
\begin{equation}\label{ei}
T_B\circ\beta'\,=\, T_{B'}\, .
\end{equation}
As noted earlier, from the construction of $\Omega_{\mathcal T}$ it follows
immediately that
\begin{equation}\label{ea}
(\beta')^* \Omega_{\mathcal T}- \Omega_{\mathcal T}\,=\, p^*d\omega\,=\,
dp^*\omega\, .
\end{equation}
{}From Lemma \ref{cor-s1} we have
\begin{equation}\label{oo}
d\omega\,=\, \frac{1}{\# \Gamma} \sum_{\gamma\in\Gamma}d(B_\gamma - B)\,=\, 0\, .
\end{equation}
Hence $p^*d\omega\,=\, 0$. Consequently, from \eqref{ea} we have
$$
(\beta')^* \Omega_{\mathcal T}\,=\, \Omega_{\mathcal T}\, .
$$
Therefore, using \eqref{kt},
$$
(T_B\circ\beta')^*\Omega_{\mathcal P}\,=\,
(\beta')^*T_B^*\Omega_{\mathcal P}\,=\, (\beta')^*(\pi\cdot \Omega_{\mathcal T})
\,=\, \pi\cdot(\beta')^*\Omega_{\mathcal T}
\,=\, \pi\cdot \Omega_{\mathcal T}\, .
$$
Now from \eqref{ei} it follows that $T^*_{B'}\Omega_{\mathcal P}\,=\,
\pi\cdot \Omega_{\mathcal T}$.
\end{proof}

\section{Bers' section in the orbifold set-up}\label{se4}

As before,
$$
{\mathcal P}(Y)^\Gamma\, \subset\, {\mathcal P}(Y) ~ \ \text{ and }\ ~
{\mathcal T}(Y)^\Gamma\, \subset\, {\mathcal T}(Y)
$$
are the fixed point loci for the actions of $\Gamma$ on ${\mathcal P}(Y)$
and ${\mathcal T}(Y)$ respectively. Consider the projection $F_Y$ in \eqref{Fy}.
We will construct a holomorphic section of it.

{}From the construction of $B'$ in \eqref{bp} it follows immediately that
the action of the Galois group $\Gamma$ on ${\mathcal P}(Y)$
preserves the image $B'({\mathcal T}(Y))$ (the action leaves the subset
invariant, but not pointwise). The action of
$\Gamma$ on the Teichm\"uller space ${\mathcal T}(Y)$ produces an action of
$\Gamma$ on the cotangent bundle $T^*{\mathcal T}(Y)$. Since the projection $f_Y$
in \eqref{fy} is $\Gamma$--equivariant, and $B'({\mathcal T}(Y))$ is
preserved by the action of $\Gamma$, it follows that the biholomorphism
$T_{B'}$ in \eqref{bp2} is $\Gamma$--equivariant.

The image $B'({\mathcal T}(Y))$ being $\Gamma$--equivariant restricts to a
holomorphic section
\begin{equation}\label{wtb}
{\widetilde B}\, :\, {\mathcal T}(Y)^\Gamma \,\longrightarrow\,
{\mathcal P}(Y)^\Gamma
\end{equation}
of the projection $F_Y$ constructed in \eqref{Fy}. As noted before, for any
$$Z\,\in\, {\mathcal T}(Y)^\Gamma\, ,$$ the holomorphic cotangent space
$T^*_Z({\mathcal T}(Y)^\Gamma)$ coincides with the space of invariants
$$
H^0(Z,\, K^{\otimes 2}_Z)^\Gamma\, \subset\, H^0(Z,\, K^{\otimes 2}_Z)\, .
$$
We also recall that $F_Y$ is a holomorphic fiber bundle whose fiber
over any Riemann surface $Z\,\in\, {\mathcal T}(Y)^\Gamma$ is an affine
space for the vector space $H^0(Z,\, K^{\otimes 2}_Z)^\Gamma$. Therefore,
${\widetilde B}$ in \eqref{wtb} produces a biholomorphism
\begin{equation}\label{wtb2}
T_{\widetilde B}\, :\, T^*({\mathcal T}(Y)^\Gamma) \,\longrightarrow\,
{\mathcal P}(Y)^\Gamma
\end{equation}
that sends any $(Z\, ,\theta)\, \in\, T^*({\mathcal T}(Y)^\Gamma)$ to
${\widetilde B}(Z)+\theta\,\in\, {\mathcal P}(Y)^\Gamma$.

The symplectic form $\Omega_{\mathcal P}$ on $\mathcal P (Y)$ restricts to a
symplectic form on ${\mathcal P}(Y)^\Gamma$. This symplectic form on
${\mathcal P}(Y)^\Gamma$ will be denoted by $\Omega^\Gamma_{\mathcal P}$.
On the other hand, the cotangent bundle $T^*({\mathcal T}(Y)^\Gamma)$
is equipped with the Liouville symplectic form; this Liouville symplectic form
will be denoted by $\Omega^\Gamma_{\mathcal T}$.

\begin{proposition}\label{prop2}
For the biholomorphism $T_{\widetilde B}$ in \eqref{wtb2}, the following holds:
$$
T^*_{\widetilde B}\Omega^\Gamma_{\mathcal P}\,=\, \pi\cdot
\Omega^\Gamma_{\mathcal T}\, .
$$
\end{proposition}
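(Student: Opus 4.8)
The plan is to realize $T_{\widetilde B}$ as the restriction of the map $T_{B'}$ of \eqref{bp2} to the fixed-point loci, and then deduce the identity from Proposition \ref{pr-sy} by a compatibility argument between the symplectic forms and the inclusions of fixed loci. First I would record the commutative square
\[
\begin{matrix}
T^*({\mathcal T}(Y)^\Gamma) & \stackrel{T_{\widetilde B}}{\longrightarrow} & {\mathcal P}(Y)^\Gamma\\
~\Big\downarrow\iota_{\mathcal T} && ~\Big\downarrow\iota_{\mathcal P}\\
T^*{\mathcal T}(Y) & \stackrel{T_{B'}}{\longrightarrow} & {\mathcal P}(Y)
\end{matrix}
\]
where $\iota_{\mathcal P}$ is the inclusion ${\mathcal P}(Y)^\Gamma\hookrightarrow{\mathcal P}(Y)$ and $\iota_{\mathcal T}$ is the map on cotangent total spaces induced by the inclusion ${\mathcal T}(Y)^\Gamma\hookrightarrow{\mathcal T}(Y)$. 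The left vertical map needs a moment's thought: the holomorphic cotangent space $T^*_Z({\mathcal T}(Y)^\Gamma)$ is canonically the quotient of $T^*_Z{\mathcal T}(Y)$ by the annihilator of $T_Z({\mathcal T}(Y)^\Gamma)$, but since $\Gamma$ acts, averaging gives a canonical $\Gamma$-equivariant splitting identifying $T^*_Z({\mathcal T}(Y)^\Gamma)$ with the $\Gamma$-invariant part $H^0(Z,K_Z^{\otimes 2})^\Gamma\subset H^0(Z,K_Z^{\otimes 2})=T^*_Z{\mathcal T}(Y)$ (this identification is exactly the one already used in \eqref{wtb2} and in the proof of Proposition \ref{prop1}). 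With that identification, $\iota_{\mathcal T}(Z,\theta)=(Z,\theta)$ and the square commutes because ${\widetilde B}$ was defined as the restriction of $B'$ to the fixed loci.

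Next I would establish the two pullback compatibilities. On the target side, $\Omega^\Gamma_{\mathcal P}$ is by definition $\iota_{\mathcal P}^*\Omega_{\mathcal P}$, so $T_{\widetilde B}^*\Omega^\Gamma_{\mathcal P}=T_{\widetilde B}^*\iota_{\mathcal P}^*\Omega_{\mathcal P}=(\iota_{\mathcal P}\circ T_{\widetilde B})^*\Omega_{\mathcal P}=(T_{B'}\circ\iota_{\mathcal T})^*\Omega_{\mathcal P}=\iota_{\mathcal T}^*T_{B'}^*\Omega_{\mathcal P}$. By Proposition \ref{pr-sy} this equals $\pi\cdot\iota_{\mathcal T}^*\Omega_{\mathcal T}$. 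So the whole statement reduces to the purely symplectic-geometric claim
\[
\iota_{\mathcal T}^*\Omega_{\mathcal T}\,=\,\Omega^\Gamma_{\mathcal T}\, ,
\]
i.e. that the Liouville form of $T^*{\mathcal T}(Y)$ pulls back, under the embedding $\iota_{\mathcal T}:T^*({\mathcal T}(Y)^\Gamma)\hookrightarrow T^*{\mathcal T}(Y)$ coming from the canonical splitting, to the Liouville form of $T^*({\mathcal T}(Y)^\Gamma)$. I would prove this at the level of the tautological one-forms: write $\sigma$ and $\sigma^\Gamma$ for the tautological one-forms, and check $\iota_{\mathcal T}^*\sigma=\sigma^\Gamma$ pointwise, using that for a submanifold $N\subset M$ the inclusion $T^*N\hookrightarrow T^*M$ defined by any splitting $T^*M|_N\to T^*N$ that is a left inverse to the transpose of $TN\hookrightarrow TM$ pulls the tautological form back correctly, because the tautological form at a covector only sees the component of a tangent vector along the base, and along $N$ the base-projections are compatible. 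Then $d$ gives $\iota_{\mathcal T}^*\Omega_{\mathcal T}=\Omega^\Gamma_{\mathcal T}$.

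The main obstacle is the identification of the left vertical map and the verification $\iota_{\mathcal T}^*\sigma=\sigma^\Gamma$: one has to be careful that the embedding $T^*({\mathcal T}(Y)^\Gamma)\hookrightarrow T^*{\mathcal T}(Y)$ used throughout the paper (via $\Gamma$-invariant quadratic differentials) really is the one arising from the $\Gamma$-averaging splitting of the restriction sequence, and that this splitting is a one-sided inverse to the differential of the fixed-locus inclusion. Granting that — which is standard, since averaging over a finite group is a projector commuting with the canonical pairing between tangent and cotangent spaces — the tautological-form computation is a short local check in Darboux-type coordinates adapted to ${\mathcal T}(Y)^\Gamma\subset{\mathcal T}(Y)$, and the proposition follows.
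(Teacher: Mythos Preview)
Your proposal is correct and follows essentially the same approach as the paper: both identify $T^*({\mathcal T}(Y)^\Gamma)$ with the fixed locus $(T^*{\mathcal T}(Y))^\Gamma\subset T^*{\mathcal T}(Y)$, observe that $T_{\widetilde B}$ is the restriction of $T_{B'}$, that $\Omega^\Gamma_{\mathcal P}$ is the restriction of $\Omega_{\mathcal P}$, and that the Liouville form restricts to the Liouville form, and then conclude from Proposition~\ref{pr-sy}. Your write-up is more explicit about the commutative square and the verification $\iota_{\mathcal T}^*\sigma=\sigma^\Gamma$, but the argument is the same.
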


\begin{proof}
Consider the action of $\Gamma$ on $T^*{\mathcal T}(Y)$ induced by the
action of $\Gamma$ on ${\mathcal T}(Y)$. It is easy to see that
the fixed-point set $(T^*{\mathcal T}(Y))^\Gamma\, \subset\, T^*{\mathcal T}(Y)$
is identified with $T^*({\mathcal T}(Y)^\Gamma)$. In particular, we have
$$
T^*({\mathcal T}(Y)^\Gamma)\, \subset\, T^*{\mathcal T}(Y)\, .
$$
The Liouville symplectic form $\Omega_{\mathcal T}$ on $T^*{\mathcal T}(Y)$
restricts to the Liouville symplectic form $\Omega^\Gamma_{\mathcal T}$ on
$T^*({\mathcal T}(Y)^\Gamma)$. The form $\Omega^\Gamma_{\mathcal P}$, by definition,
is the restriction of $\Omega_{\mathcal P}$. Also, The map $T_{\widetilde B}$
coincides with the restriction of $T_{B'}$ (constructed in \eqref{bp2}) to the
submanifold $T^*({\mathcal T}(Y)^\Gamma)$ of $T^*{\mathcal T}(Y)$.
Therefore, the proposition follows from Proposition \ref{pr-sy}.
\end{proof}

We recall that ${\mathcal P}(Y)^\Gamma$ and ${\mathcal T}(Y)^\Gamma$ are identified
with ${\mathcal P}(S)$ and ${\mathcal T}(S)$ respectively. Using these
identifications, the projection $F_Y$ in \eqref{Fy} coincides with the projection
$\widetilde{f}_S$ in \eqref{e4}.

The construction of the symplectic form on $\mathcal P (Y)$ extends to 
${\mathcal P}(S)$. Indeed, the symplectic form on the representation space of
a compact surface group constructed in \cite{Go}, \cite{AtBo} can be generalized
to the representation space of the fundamental group of a punctured surface
once we fix the monodromy around the punctures (see \cite{BG}). Let
$\Omega^S_{\mathcal P}$ denote the holomorphic symplectic form
on ${\mathcal P}(S)$. This form $\Omega^S_{\mathcal P}$ coincides with
$\Omega_{\mathcal P}$ using the above mentioned identification of
${\mathcal P}(Y)^\Gamma$ with ${\mathcal P}(S)$. The Liouville symplectic
form on $T^*{\mathcal T}(S)$ will be denoted by
$\Omega^S_{\mathcal T}$.

The section $\widetilde B$ (see \eqref{wtb}) of
the projection $F_Y$ produces a holomorphic section of
the projection $\widetilde{f}_S$ in \eqref{e4}. As done before, this
holomorphic section produces a biholomorphism
$$
T_{S,B}\, :\, T^*{\mathcal T}(S)\,\longrightarrow\, {\mathcal P}(S)\, .
$$
This map $T_{S,B}$ clearly coincides with $T_{\widetilde B}$ in \eqref{wtb2}
after identifying ${\mathcal P}(S)$ and $T^*{\mathcal T}(S)$ with 
${\mathcal P}(Y)^\Gamma$ and $T^*({\mathcal T}(Y)^\Gamma)$ respectively.

Therefore, Proposition \ref{prop2} gives the following:

\begin{theorem}\label{thm1}
For the above biholomorphism $T_{S,B}$, the following holds:
$$
T^*_{S,B}\Omega^S_{\mathcal P}\,=\, \pi\cdot
\Omega^S_{\mathcal T}\, .
$$
\end{theorem}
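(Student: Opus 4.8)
The plan is to deduce Theorem \ref{thm1} directly from the already-established Proposition \ref{prop2}, by tracing through the two identifications that connect the orbifold objects on $S$ with the $\Gamma$-fixed loci on $Y$. Concretely, I would argue that all the data appearing in the statement of the theorem — the manifolds $T^*{\mathcal T}(S)$ and ${\mathcal P}(S)$, the biholomorphism $T_{S,B}$, and the two symplectic forms $\Omega^S_{\mathcal T}$ and $\Omega^S_{\mathcal P}$ — correspond, under these identifications, to exactly the data appearing in Proposition \ref{prop2}.

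First I would recall the identifications ${\mathcal P}(Y)^\Gamma\,\cong\, {\mathcal P}(S)$ (from \eqref{fix} and the surrounding discussion, via Harer's result \cite{Har} for the Teichm\"uller part ${\mathcal T}(Y)^\Gamma\,\cong\,{\mathcal T}(S)$) and the resulting identification $T^*({\mathcal T}(Y)^\Gamma)\,\cong\, T^*{\mathcal T}(S)$. Under these, the projection $F_Y$ of \eqref{Fy} becomes $\widetilde{f}_S$ of \eqref{e4}, the section ${\widetilde B}$ of \eqref{wtb} becomes the averaged Bers' section of $\widetilde{f}_S$, and hence the biholomorphism $T_{\widetilde B}$ of \eqref{wtb2} becomes $T_{S,B}$. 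This is essentially assembled in the paragraphs immediately preceding the theorem, so here it is a matter of citing those identifications rather than re-proving them.

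Next I would match the symplectic forms. On the Teichm\"uller side this is immediate: the Liouville form is functorial, so the Liouville form $\Omega^\Gamma_{\mathcal T}$ on $T^*({\mathcal T}(Y)^\Gamma)$ corresponds to the Liouville form $\Omega^S_{\mathcal T}$ on $T^*{\mathcal T}(S)$ under the biholomorphism $T^*({\mathcal T}(Y)^\Gamma)\,\cong\, T^*{\mathcal T}(S)$. On the projective-structures side I would invoke the fact, recalled just before the theorem, that $\Omega^S_{\mathcal P}$ — defined via the Goldman/Atiyah--Bott construction for the punctured surface $S\setminus{\mathbb D}$ with fixed conjugacy classes around the punctures, following \cite{BG} — coincides with $\Omega^\Gamma_{\mathcal P}$, the restriction of $\Omega_{\mathcal P}$ to ${\mathcal P}(Y)^\Gamma$, under ${\mathcal P}(Y)^\Gamma\,\cong\, {\mathcal P}(S)$. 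With all four objects matched, pulling back the identity $T^*_{\widetilde B}\Omega^\Gamma_{\mathcal P}\,=\,\pi\cdot\Omega^\Gamma_{\mathcal T}$ of Proposition \ref{prop2} through the identifications yields $T^*_{S,B}\Omega^S_{\mathcal P}\,=\,\pi\cdot\Omega^S_{\mathcal T}$.

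The only genuine subtlety — the point I would flag as the main obstacle — is the compatibility of the two descriptions of $\Omega^S_{\mathcal P}$: one as the restriction to the fixed locus of the symplectic form on the representation space $\mathrm{Hom}(\pi_1(Y),\mathrm{PSL}(2,{\mathbb C}))/\mathrm{PSL}(2,{\mathbb C})$, the other as the relative-character-variety form of \cite{BG} for $\pi_1(S\setminus{\mathbb D})$ with the monodromy around each $x_i$ fixed to the elliptic class of order $\varpi(x_i)$. I would resolve this by noting that a $\Gamma$-invariant representation of $\pi_1(Y)$ descends to a representation of $\pi_1(S\setminus{\mathbb D})$ sending a loop around $x_i$ to an element of finite order $\varpi(x_i)$, and that both symplectic forms arise from the same cup-product pairing on the relevant (parabolic/equivariant) group cohomology; since the paper has already asserted this coincidence in the discussion preceding the theorem, in the write-up I would simply cite that discussion together with \cite{BG} and \cite{Har}, and present the argument above as the short deduction it is.
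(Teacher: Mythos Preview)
Your proposal is correct and follows exactly the paper's approach: Theorem~\ref{thm1} is stated as an immediate consequence of Proposition~\ref{prop2} via the identifications ${\mathcal P}(S)\cong{\mathcal P}(Y)^\Gamma$, ${\mathcal T}(S)\cong{\mathcal T}(Y)^\Gamma$, $T_{S,B}\leftrightarrow T_{\widetilde B}$, and the matching of the symplectic forms, all of which are set up in the paragraphs immediately preceding the theorem. One small slip: the reference \cite{Har} is to Harvey, not Harer.
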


\section{Schottky and Earle uniformizations}

The projection $f_Y$ of \eqref{fy} admits a couple of other natural holomorphic
sections, apart from the one described in \eqref{bers-section}. One of these
sections is given by Earle in \cite{Earle}, which is a modification of the
simultaneous uniformization theorem. The other one is given by the uniformization by
Schottky groups. It is natural to ask whether the symplectic structures are preserved
by the biholomorphisms $T^*{\mathcal T}(S)\,\longrightarrow\, {\mathcal P}(S)$
constructed using these sections, i.e., whether the analogue of Theorem \ref{thm1}
holds. Our aim in this final section is to address this question.

In \cite{Earle}, Earle constructed a holomorphic section
\[
e\,:\, \mathcal T(Y) \,\longrightarrow\, \mathcal P(Y)\,.
\]
The construction of $e$, which follows closely the approach of the simultaneous
uniformization theorem of Bers, is done using a marking on $Y$ and an involution
of the fundamental group of $Y$ induced by an orientation reversing diffeomorphism
of $Y$.
The construction of $e$ follows a modification of the simultaneous uniformization
theorem. In a sense this section is intrinsic, since it does not require fixing a base
point of $\mathcal T(Y)$, unlike in the construction of Bers' section. As said above,
the section $e$ is holomorphic.

For each element $\gamma\,\in\,\Gamma$, we get another section $e_\gamma$ defined by
$$e_\gamma \,=\,\gamma_{\mathcal P}\circ e \circ \gamma_{\mathcal T}^{-1}$$ (just as
done in \eqref{bg}). Let $\rho_\gamma$ denote the one-form on $\mathcal T(Y)$ defined by
\begin{equation}
\rho_\gamma \,:=\, e_\gamma - B_\gamma
\end{equation}

\begin{lemma}\label{lemma51}
The form $\rho_\gamma$ is closed.
\end{lemma}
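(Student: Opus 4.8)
The plan is to mimic, for the one-form $\rho_\gamma = e_\gamma - B_\gamma$, exactly the argument that was used in Lemma \ref{cor-s1} to show that $B_\gamma - B$ is closed. The essential structure of that earlier proof was: translating the Bers section $B$ by a closed one-form gives a symplectomorphism of $(T^*\mathcal{T}(Y),\Omega_{\mathcal T})$, and conversely, if translation by a one-form is a symplectomorphism then that one-form must be closed (because the pullback of the Liouville form under the translation $v\mapsto v+\mu(p(v))$ differs from $\Omega_{\mathcal T}$ by exactly $p^*d\mu$). So the crux here is to establish that the translation of $T^*\mathcal{T}(Y)$ carrying $T_{B_\gamma}$ to $T_{e_\gamma}$ is a symplectomorphism, which in turn reduces to knowing that $T_{e_\gamma}$ and $T_{B_\gamma}$ each pull back $\Omega_{\mathcal P}$ to $\pi\cdot\Omega_{\mathcal T}$.

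First I would record the analogue of Lemma \ref{le-sy} for Earle's section: the biholomorphism $T_e : T^*\mathcal{T}(Y)\to\mathcal{P}(Y)$ satisfies $T_e^*\Omega_{\mathcal P} = \pi\cdot\Omega_{\mathcal T}$. This is the content of the main theorem of \cite{AB} (the Earle-section analogue of Kawai's result \eqref{kt}), and the paper is already citing \cite{AB} for precisely this. Then, since $e_\gamma = \gamma_{\mathcal P}\circ e\circ\gamma_{\mathcal T}^{-1}$, the same commutative-square argument as in the proof of Lemma \ref{le-sy} — using that $d^*\gamma_{\mathcal T}$ preserves $\Omega_{\mathcal T}$ and, by Lemma \ref{lem-spp}, that $\gamma_{\mathcal P}$ preserves $\Omega_{\mathcal P}$ — yields $T_{e_\gamma}^*\Omega_{\mathcal P} = \pi\cdot\Omega_{\mathcal T}$. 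Combined with Lemma \ref{le-sy} itself, which gives $T_{B_\gamma}^*\Omega_{\mathcal P} = \pi\cdot\Omega_{\mathcal T}$, both sections behave identically with respect to the symplectic forms.

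Next I would introduce the fiberwise translation
$$
\beta_\gamma : T^*\mathcal{T}(Y)\longrightarrow T^*\mathcal{T}(Y), \qquad v\longmapsto v + \rho_\gamma(p(v)),
$$
where $p$ is the projection in \eqref{p}; since $e_\gamma$ and $B_\gamma$ are both holomorphic sections of $f_Y$ and the fibers are affine spaces for the fibers of $T^*\mathcal{T}(Y)$, the difference $\rho_\gamma$ is a genuine holomorphic one-form on $\mathcal{T}(Y)$ and $\beta_\gamma$ is a well-defined biholomorphism with $T_{B_\gamma}\circ\beta_\gamma = T_{e_\gamma}$. From the two equalities in the previous paragraph we get $\beta_\gamma^*\Omega_{\mathcal T} = \Omega_{\mathcal T}$: indeed $\pi\cdot\Omega_{\mathcal T} = T_{e_\gamma}^*\Omega_{\mathcal P} = (T_{B_\gamma}\circ\beta_\gamma)^*\Omega_{\mathcal P} = \beta_\gamma^*(T_{B_\gamma}^*\Omega_{\mathcal P}) = \beta_\gamma^*(\pi\cdot\Omega_{\mathcal T})$, and $\pi\neq 0$. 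On the other hand, the standard computation for the Liouville form (already invoked twice in this section) gives $\beta_\gamma^*\Omega_{\mathcal T} - \Omega_{\mathcal T} = p^*d\rho_\gamma$. Hence $p^*d\rho_\gamma = 0$, and since $p$ is a submersion this forces $d\rho_\gamma = 0$, which is the claim.

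The proof is essentially a transcription of Lemma \ref{cor-s1}, so I do not anticipate a serious obstacle; the one point that needs care is making sure the Earle-section analogue $T_e^*\Omega_{\mathcal P} = \pi\cdot\Omega_{\mathcal T}$ is correctly attributed and stated — this is where \cite{AB} is essential, and it is the only external input beyond what was developed for the Bers section in Section 3.
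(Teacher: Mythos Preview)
Your proof is correct but follows a genuinely different route from the paper's. You argue by first importing from \cite{AB} the Earle-section analogue of Kawai's theorem, $T_e^*\Omega_{\mathcal P}=\pi\cdot\Omega_{\mathcal T}$, then transporting it along $\gamma$ to get $T_{e_\gamma}^*\Omega_{\mathcal P}=\pi\cdot\Omega_{\mathcal T}$, and finally running the Liouville-translation argument of Lemma~\ref{cor-s1} verbatim with $B_\gamma$ replaced by $e_\gamma$. The paper instead brings in a new external tool: McMullen's Theorem~9.2, which says that for \emph{any} holomorphic section $s$ of $f_Y$ the $1$-form $s-\phi$ (with $\phi$ the Fuchsian section) has a fixed exterior derivative; applying this to $e_\gamma$ and $B_\gamma$ immediately gives $d(e_\gamma-B_\gamma)=0$. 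Your approach stays entirely within the symplectic framework already set up in Section~3, at the cost of invoking the main theorem of \cite{AB}; the paper's approach is shorter and, more importantly, establishes the stronger fact that \emph{any two} holomorphic sections differ by a closed form---which is exactly what is needed again at the end of the section for the Schottky case. Note also that in the paper's logical order the identity \eqref{zz} is \emph{derived from} Lemma~\ref{lemma51}, whereas you effectively reverse that implication by sourcing the $\gamma=\mathrm{id}$ case of \eqref{zz} from \cite{AB}; this is not circular, but it does reorganize the dependencies.
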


\begin{proof}
Let $\phi$ be the $C^\infty$ section of the projection the projection $f_Y$ (see
\eqref{fy}) given by the Fuchsian uniformization. This section is not
holomorphic. We define
\[
\alpha_\gamma \,:= \, e_\gamma - \phi \ ~\text{ and }\ ~
\beta_\gamma \,:=\, B_\gamma - \phi\, .
\]
Since $e_\gamma$ and $B_\gamma$ are holomorphic sections, Theorem 9.2 of
\cite[p. 355]{Mcm} applies, and from it we conclude that
\[
d\alpha_\gamma \,= \, d\beta_\gamma\, .
\]
Therefore, $d\rho_\gamma \,=\, d(e_\gamma-B_\gamma)\,=\,
d\alpha_\gamma - d\beta_\gamma\,=\, 0$.
\end{proof}

Let
\[
T^\gamma_E\,:\, T^*{\mathcal T}(Y) \,\longrightarrow\,  \mathcal P(Y)
\]
be the biholomorphism that sends any $(Z\, ,\theta)\, \in\,
T^*{\mathcal T}(Y)$ to $e_\gamma(Z)+\theta\,\in\, \mathcal P(Y)$. Let
$$
A_{\rho_\gamma}\, :\, T^*{\mathcal T}(Y) \,\longrightarrow\,
T^*{\mathcal T}(Y)
$$
be the holomorphic automorphism defined by $v\, \longmapsto\,
v+\rho_\gamma(p(v))$, where $p$ is the projection in \eqref{p}. Clearly,
\begin{equation}\label{61}
T^\gamma_E\,=\, T_{B_\gamma}\circ A_{\rho_\gamma}\, ,
\end{equation}
where $T_{B_\gamma}$ is constructed in \eqref{tbg}. Now, using \eqref{kt}
and Lemma \ref{lemma51},
$$
(T_{B_\gamma}\circ A_{\rho_\gamma})^*\Omega_{\mathcal P}\,=\,
(A_{\rho_\gamma})^*(T_{B_\gamma})^*\Omega_{\mathcal P}
$$
$$
=\,
\pi\cdot (A_{\rho_\gamma})^*\Omega_{\mathcal T}\,=\,
\pi\cdot (\Omega_{\mathcal T}+d\rho_\gamma)\,=\,
\pi\cdot\Omega_{\mathcal T}\, .
$$
Therefore, from \eqref{61} we have
\begin{equation}\label{zz}
(T^\gamma_E)^*\Omega_{\mathcal P} \,=\, \pi\cdot\Omega_{\mathcal T}\,.
\end{equation}

We now average these sections, and define
\begin{equation}\label{ep}
e'\,:=\, \frac{1}{\# \Gamma} \sum_{\gamma\in\Gamma}e_\gamma\, ,
\end{equation}
which is a holomorphic section of $f_Y$. Let
\[
T_{e'} \,:\, T^*\mathcal T(Y) \,\longrightarrow\, \mathcal P(Y)
\]
be the biholomorphism that sends any $(Z\, ,\theta)$ to $e'(Z)+\theta$
(as in \eqref{bp2}).

\begin{proposition}
For the above map $T_{e'}$, the following holds:
\[
T^*_{e'}\Omega_{\mathcal P} \,=\, \pi\cdot \Omega_{\mathcal T}\,.
\]
\end{proposition}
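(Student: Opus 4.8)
The plan is to reproduce the averaging argument used in Proposition~\ref{pr-sy}, but now starting from the Earle sections $e_\gamma$ rather than the Bers sections $B_\gamma$. First I would introduce the holomorphic one-form
$$
\eta\, :=\, \frac{1}{\#\Gamma}\sum_{\gamma\in\Gamma}\rho_\gamma\,=\,
\frac{1}{\#\Gamma}\sum_{\gamma\in\Gamma}(e_\gamma - B_\gamma)
$$
on $\mathcal T(Y)$, and observe that, by Lemma~\ref{lemma51}, each $\rho_\gamma$ is closed, hence $d\eta\,=\,0$. Next I would note that, since $e'\,=\,B'+\eta$ (averaging the identity $e_\gamma = B_\gamma + \rho_\gamma$ over $\Gamma$), the biholomorphism $T_{e'}$ factors as $T_{e'}\,=\,T_{B'}\circ A_\eta$, where
$$
A_\eta\, :\, T^*\mathcal T(Y)\,\longrightarrow\, T^*\mathcal T(Y)\, ,\qquad
v\,\longmapsto\, v+\eta(p(v))\, ,
$$
with $p$ the projection in \eqref{p}.

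Then I would invoke the standard fact about the Liouville form, already used in the proofs of Lemma~\ref{cor-s1} and Proposition~\ref{pr-sy}, that $A_\eta^*\Omega_{\mathcal T}-\Omega_{\mathcal T}\,=\,p^*d\eta$. Since $d\eta\,=\,0$, this gives $A_\eta^*\Omega_{\mathcal T}\,=\,\Omega_{\mathcal T}$. Combining this with Proposition~\ref{pr-sy}, which tells us $T_{B'}^*\Omega_{\mathcal P}\,=\,\pi\cdot\Omega_{\mathcal T}$, we compute
$$
T_{e'}^*\Omega_{\mathcal P}\,=\,(T_{B'}\circ A_\eta)^*\Omega_{\mathcal P}
\,=\,A_\eta^*\, T_{B'}^*\Omega_{\mathcal P}
\,=\,A_\eta^*(\pi\cdot\Omega_{\mathcal T})
\,=\,\pi\cdot A_\eta^*\Omega_{\mathcal T}
\,=\,\pi\cdot\Omega_{\mathcal T}\, ,
$$
which is the claimed identity.

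There is really no serious obstacle here: the entire argument is a recombination of ingredients already established. The one point that deserves a sentence of justification is that $e'$ is genuinely of the form $B'+\eta$ with $\eta$ a \emph{global} holomorphic one-form on $\mathcal T(Y)$ — this is immediate because the fibers of $f_Y$ are affine spaces for the fibers of $T^*\mathcal T(Y)$ and both $e'$ and $B'$ are holomorphic sections, so their difference is a holomorphic section of $T^*\mathcal T(Y)$, i.e. a holomorphic one-form. The only place the specific nature of Earle's section enters is through Lemma~\ref{lemma51} (closedness of $\rho_\gamma$, which itself rests on McMullen's Theorem~9.2 in \cite{Mcm}); everything downstream is formal. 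Thus the proof is essentially a three-line deduction from Proposition~\ref{pr-sy}, Lemma~\ref{lemma51}, and the behaviour of the Liouville form under fiberwise translation by a closed one-form.

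\begin{proof}
Averaging the identity $e_\gamma\,=\,B_\gamma+\rho_\gamma$ over $\gamma\in\Gamma$, we get $e'\,=\,B'+\eta$, where
$$
\eta\,:=\,\frac{1}{\#\Gamma}\sum_{\gamma\in\Gamma}\rho_\gamma\, .
$$
Since the fibers of $f_Y$ are affine spaces for the fibers of $T^*\mathcal T(Y)$, and $e'$, $B'$ are holomorphic sections of $f_Y$, the difference $\eta$ is a holomorphic one-form on $\mathcal T(Y)$. By Lemma~\ref{lemma51} each $\rho_\gamma$ is closed, so $d\eta\,=\,0$.

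Let
$$
A_\eta\,:\, T^*{\mathcal T}(Y)\,\longrightarrow\, T^*{\mathcal T}(Y)
$$
be the holomorphic automorphism defined by $v\,\longmapsto\, v+\eta(p(v))$, where $p$ is the projection in \eqref{p}. As noted in the proofs of Lemma~\ref{cor-s1} and Proposition~\ref{pr-sy}, the construction of the Liouville form $\Omega_{\mathcal T}$ gives
$$
A_\eta^*\Omega_{\mathcal T}-\Omega_{\mathcal T}\,=\,p^*d\eta\,=\,0\, ,
$$
so $A_\eta^*\Omega_{\mathcal T}\,=\,\Omega_{\mathcal T}$. Clearly $T_{e'}\,=\,T_{B'}\circ A_\eta$. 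Therefore, using Proposition~\ref{pr-sy},
$$
T_{e'}^*\Omega_{\mathcal P}\,=\,(T_{B'}\circ A_\eta)^*\Omega_{\mathcal P}
\,=\,A_\eta^*\,T_{B'}^*\Omega_{\mathcal P}
\,=\,A_\eta^*(\pi\cdot\Omega_{\mathcal T})
\,=\,\pi\cdot A_\eta^*\Omega_{\mathcal T}
\,=\,\pi\cdot\Omega_{\mathcal T}\, .
$$
This completes the proof.
\end{proof}
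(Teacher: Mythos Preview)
Your proof is correct and follows the same fiberwise-translation strategy as the paper: factor $T_{e'}$ through a known symplectomorphism composed with translation by a closed one-form. The only difference is your choice of base section --- you translate off of $T_{B'}$ (using $\eta=\frac{1}{\#\Gamma}\sum\rho_\gamma$, closed immediately by Lemma~\ref{lemma51}), whereas the paper translates off of $T_e$ (using $\mu=\frac{1}{\#\Gamma}\sum(e_\gamma-e)$ and then showing $d\mu=0$ via $d(\mu-\omega)=0$ and $d\omega=0$); your route is marginally more direct.
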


\begin{proof}
As in Proposition \ref{pr-sy}, we define a holomorphic one-form on $T(Y)$
by
\[
\mu\, :=\, \frac{1}{\# \Gamma} \sum_{\gamma\in\Gamma}(e_\gamma - e)\, .
\]
In view of \eqref{zz}, it suffices  to show that $\mu$ is closed (see
the proof of Proposition \ref{pr-sy}).

Consider $\omega$ constructed in \eqref{o}. We observe that
$$
\mu-\omega\,=\, \frac{1}{\# \Gamma} \sum_{\gamma\in\Gamma} (e_\gamma - e - B_\gamma + B)
\,=\,\frac{1}{\# \Gamma} \sum_{\gamma\in\Gamma} ((e_\gamma-B_\gamma) - (e-B))
$$
$$
\,=\,
\frac{1}{\# \Gamma} \sum_{\gamma\in\Gamma\setminus{e^0}}(e_\gamma-B_\gamma)\, ,
$$
where $e^0$ is the identity element of $\Gamma$. Hence from Lemma \ref{lemma51}
it follows that $d(\mu-\omega)\,=\, 0$. Now from
\eqref{oo} we conclude that $d\mu\,=\,0$.
\end{proof}

The constructions of Section \ref{se4} can be done with the section $e'$
in \eqref{ep} instead of $B'$, to obtain a section
$$
\widetilde{e}\,:\, \mathcal T(Y)^\Gamma \,\longrightarrow\, \mathcal P(Y)^\Gamma
$$
of the projection $F_Y$ in \eqref{Fy}. Just as before, $\widetilde{e}$
produces a biholomorphism
\[
T_{S,e}\,:\, T^*\mathcal T(S) \,\longrightarrow\, \mathcal P(S)\, .
\]

Now just as in Theorem \ref{thm1}, we have:

\begin{theorem}\label{thm2}
For biholomorphic mapping $T_{S,e}$,
\[
T_{S,e}^*\Omega^S_{\mathcal P} \,=\, \pi\cdot \Omega^S_{\mathcal T}\, .
\]
\end{theorem}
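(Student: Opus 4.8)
The plan is to mirror, almost verbatim, the passage between Proposition~\ref{prop2} and Theorem~\ref{thm1}, but with the Bers data replaced by the Earle data. First I would observe that the averaged section $e'$ is $\Gamma$-equivariant in the same sense that $B'$ was: from the defining formula $e'=\frac{1}{\#\Gamma}\sum_\gamma \gamma_{\mathcal P}\circ e\circ\gamma_{\mathcal T}^{-1}$, any $\delta\in\Gamma$ permutes the summands, so $\delta_{\mathcal P}\circ e'=e'\circ\delta_{\mathcal T}$, and hence the image $e'(\mathcal T(Y))$ is a $\Gamma$-invariant subset of $\mathcal P(Y)$. Because $f_Y$ is $\Gamma$-equivariant and $e'(\mathcal T(Y))$ is $\Gamma$-stable, the biholomorphism $T_{e'}:T^*\mathcal T(Y)\to\mathcal P(Y)$ is $\Gamma$-equivariant for the induced action on the cotangent bundle. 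Restricting to fixed-point loci then yields the section $\widetilde e:\mathcal T(Y)^\Gamma\to\mathcal P(Y)^\Gamma$ of $F_Y$, and since for $Z\in\mathcal T(Y)^\Gamma$ the fiber of $F_Y$ over $Z$ is an affine space for $H^0(Z,K_Z^{\otimes 2})^\Gamma=T^*_Z(\mathcal T(Y)^\Gamma)$, the section $\widetilde e$ produces the biholomorphism $T_{S,e}:T^*\mathcal T(S)\to\mathcal P(S)$, which is exactly the restriction of $T_{e'}$ to the submanifold $T^*(\mathcal T(Y)^\Gamma)\subset T^*\mathcal T(Y)$.

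The second step is the symplectic bookkeeping, which is now purely formal. As recorded in the proof of Proposition~\ref{prop2}, the fixed-point set $(T^*\mathcal T(Y))^\Gamma$ is identified with $T^*(\mathcal T(Y)^\Gamma)$, the Liouville form $\Omega_{\mathcal T}$ restricts to $\Omega^\Gamma_{\mathcal T}$, and $\Omega^\Gamma_{\mathcal P}$ is by definition the restriction of $\Omega_{\mathcal P}$. Since the preceding Proposition (the one just proved for $T_{e'}$) gives $T_{e'}^*\Omega_{\mathcal P}=\pi\cdot\Omega_{\mathcal T}$ on all of $T^*\mathcal T(Y)$, restricting this identity of two-forms to the submanifold $T^*(\mathcal T(Y)^\Gamma)$ and using that $T_{S,e}$ is $T_{e'}|_{T^*(\mathcal T(Y)^\Gamma)}$ immediately gives $T_{S,e}^*\Omega^\Gamma_{\mathcal P}=\pi\cdot\Omega^\Gamma_{\mathcal T}$. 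Finally, invoking the identifications $\mathcal P(Y)^\Gamma\cong\mathcal P(S)$, $\mathcal T(Y)^\Gamma\cong\mathcal T(S)$, under which $\Omega^\Gamma_{\mathcal P}$ corresponds to $\Omega^S_{\mathcal P}$ and $\Omega^\Gamma_{\mathcal T}$ to $\Omega^S_{\mathcal T}$, translates this into the asserted equality $T_{S,e}^*\Omega^S_{\mathcal P}=\pi\cdot\Omega^S_{\mathcal T}$.

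I do not expect any genuine obstacle here; everything needed has already been assembled. The only point requiring a line of care is the compatibility of restriction with pullback of forms, namely that for the inclusion $\iota:T^*(\mathcal T(Y)^\Gamma)\hookrightarrow T^*\mathcal T(Y)$ one has $(T_{e'}\circ\iota)^*\Omega_{\mathcal P}=\iota^*(T_{e'}^*\Omega_{\mathcal P})$, together with $\iota^*\Omega_{\mathcal T}=\Omega^\Gamma_{\mathcal T}$ and $T_{e'}\circ\iota=j\circ T_{S,e}$ where $j:\mathcal P(Y)^\Gamma\hookrightarrow\mathcal P(Y)$, so that $\iota^*T_{e'}^*\Omega_{\mathcal P}=T_{S,e}^*j^*\Omega_{\mathcal P}=T_{S,e}^*\Omega^\Gamma_{\mathcal P}$. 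These are all routine functoriality statements. In short, the proof is: establish $\Gamma$-equivariance of $e'$ and $T_{e'}$, restrict to fixed loci to identify $T_{S,e}$ with $T_{e'}|_{T^*(\mathcal T(Y)^\Gamma)}$, restrict the symplectic identity from the preceding Proposition, and transport along the identifications with $\mathcal P(S)$ and $\mathcal T(S)$.
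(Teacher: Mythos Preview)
Your proposal is correct and matches the paper's approach exactly: the paper simply remarks that the constructions of Section~\ref{se4} carry over with $e'$ in place of $B'$ and then states Theorem~\ref{thm2} ``just as in Theorem~\ref{thm1}'', which is precisely the restriction-to-fixed-loci argument you spell out. Your write-up is in fact more detailed than the paper's, which gives no proof beyond this reference.
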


We finish this section with the observation that all the above constructions
carry to the case of the Schottky section. This section also satisfies
McMullen's theorem (Theorem 9.2 of \cite[p. 355]{Mcm}) which says the
following: Let $X\,=\,\Omega/\Gamma$ be the quotient Riemann surface
for a finitely generated Kleinian group $\Gamma$, and
let $\mu,\, \nu \,\in\, M(X)$ be a pair of
sufficiently smooth Beltrami differentials.
Then
$$
\int_X \phi_\mu \, \nu = \int_X \phi_\nu \, \mu\, ,
$$
where $\phi_\mu,\, \phi_\nu \,\in\, L^1(X,dz^2)$ give
the projective distortions of $\mu$ and $\nu$.

Therefore, Theorem \ref{thm2} remains valid for the biholomorphism $$T^*
{\mathcal T}(S) \,\longrightarrow\, \mathcal P(S)$$ given by the Schottky section.

\section*{Acknowledgements}

We thank the referee for helpful comments. The second-named
author is partially supported by a J. C. Bose Fellowship.

\end{document}